\newcommand{\PP}{\mathcal B}
\newcommand{\covJ}{\mathrm{cov}{}\kern-2pt{}_{\mathcal J}}
\title{Densities, submeasures and partitions of groups}
\author{Taras Banakh, Igor Protasov, Sergiy Slobodianiuk}
\address{T.Banakh: Ivan Franko National University of Lviv, Ukraine and Jan Kochanowski University in Kielce, Poland.}
\email{t.o.banakh@gmail.com}
\address{I.Protasov and S.Slobodianiuk: Taras Shevchenko National University, Kyiv, Ukraine}
\email{i.v.protasov@gmail.com, slobodianiuk@yandex.ru}
\subjclass{05E15; 05D10; 28C10}
\keywords{partition of a group; density; submeasure; amenable group}
\newcommand{\C}{\mathcal C}
\newcommand{\A}{\mathcal A}
\newcommand{\I}{\mathcal I}
\newcommand{\IN}{\mathbb N}
\newcommand{\cov}{\mathrm{cov}}
\newcommand{\w}{\omega}
\newcommand{\is}{\mathsf{is}}
\newcommand{\us}{\mathsf{us}}
\newcommand{\wis}{\widehat{\mathsf{\i s}}}
\newcommand{\iss}{\mathsf{iss}}
\newcommand{\sis}{\mathsf{sis}}
\newcommand{\wus}{\widehat{\mathsf{us}}}
\newcommand{\e}{\varepsilon}
\newcommand{\Ipack}{\mathrm{pack}_\I}
\newcommand{\pack}{\mathrm{pack}}
\newcommand{\IP}{\mathrm{IP}}
\newtheorem{theorem}{Theorem}[section]
\newtheorem{problem}[theorem]{Problem}
\newtheorem{question}[theorem]{Problem}
\newtheorem{proposition}[theorem]{Proposition}
\newtheorem{corollary}[theorem]{Corollary}
\theoremstyle{definition}
\newtheorem{remark}[theorem]{Remark}
\newtheorem{example}[theorem]{Example}
\begin{document}
\begin{abstract}
In 1995 in Kourovka notebook the second author asked the following problem: {\em is it true that for each partition $G=A_1\cup\dots\cup A_n$ of a group $G$ there is a cell $A_i$ of the partition such that $G=FA_iA_i^{-1}$ for some set $F\subset G$ of cardinality $|F|\le n$?} In this paper we survey several partial solutions of this problem, in particular those involving certain canonical invariant densities  and submeasures on groups. In particular, we show that for any partition $G=A_1\cup\dots\cup A_n$  of a group $G$ there are cells $A_i$, $A_j$ of the partition such that
\begin{itemize}
\item $G=FA_jA_j^{-1}$ for some finite set $F\subset G$ of cardinality $|F|\le \max_{0<k\le n}\sum_{p=0}^{n-k}k^p\le n!$;
\item $G=F\cdot\bigcup_{x\in E}xA_iA_i^{-1}x^{-1}$ for some finite sets $F,E\subset G$ with $|F|\le n$;
\item $G=FA_iA_i^{-1}A_i$ for some finite set $F\subset G$ of cardinality $|F|\le n$;
\item the set $(A_iA_i^{-1})^{4^{n-1}}$ is a subgroup of index $\le n$ in $G$.
\end{itemize}
The last three statements are derived from the corresponding density results.
\end{abstract}
\maketitle

\section{Introduction}

In this paper we survey partial solutions to the following open problem posed by I.V.Protasov in 1995 in the Kourovka notebook \cite[Problem 13.44]{Kourovka}.

\begin{problem}\label{prob1} Is it true that for any finite partition $G=A_1\cup\dots\cup A_n$ of a group $G$ there is a cell $A_i$ of the partition and a subset $F\subset G$ of cardinality $|F|\le n$ such that $G=FA_iA_i^{-1}$?
\end{problem}

In \cite{Kourovka} it was observed that this problem has simple affirmative solution for amenable groups (see Theorem~\ref{t4.3} below).

Problem~\ref{prob1} is a partial case of its ``idealized'' $G$-space version.
Let us recall that a $G$-space is a set $X$ endowed with a left action $\alpha:G\times X\to X$, $\alpha:(g,x)\mapsto gx$, of a group $G$. Each group $G$ will be considered as a $G$-space endowed with the left action $\alpha:G\times G\to G$, $\alpha:(g,x)\mapsto gx$.

A non-empty family $\I$ of subsets of a set $X$ is called a {\em Boolean ideal} if for any $A,B\in\I$ and $C\subset X$ we get $A\cup B\in\I$ and $A\cap C\in\I$. A Boolean ideal $\I$ on a set $X$ will be called {\em trivial\/} if it coincides with the Boolean ideal $\PP(X)$ of all subsets of $X$. By $[X]^{<\w}$ we shall denote the Boolean ideal consisting of all finite subsets of $X$. A Boolean ideal $\I$ on a $G$-space $X$ is called {\em $G$-invariant} if for any $A\in\I$ and $g\in G$ the shift $gA$ of $A$ belongs to the ideal $\I$. By an {\em ideal $G$-space} we shall understand a pair $(X,\I)$ consisting of a $G$-space $X$ and a non-trivial $G$-invariant Boolean ideal $\I\subset\PP(X)$.

For an ideal $G$-space $(X,\I)$ and a subset $A\subset X$ the set
$$\Delta_\I(A)=\{x\in G:A\cap xA\notin\I\}\subset G$$
will be called the {\em $\I$-difference set} of $A$. It is not empty if and only if $A\notin\I$.

For a non-empty subset $A\subset G$ of a group $G$ its {\em covering number} is defined as
$$\cov(A)=\min\{|F|:F\subset G,\;G=FA\}.$$
More generally, for a Boolean ideal $\mathcal J\subset\PP(G)$ on a group $G$ and a non-empty subset $A\subset G$ let
$$\covJ(A)=\min\{|F|:F\subset G,\;\;G\setminus FA\in\mathcal J\}$$be the {\em $\mathcal J$-covering number} of $A$.

Observe that for the smallest Boolean ideal $\I=\{\emptyset\}$ on a group $G$ and a subset $A\subset G$ the $\I$-difference set $\Delta_\I(A)$ is equal to $AA^{-1}$.
That is why Problem~\ref{prob1} is a partial case of the following more general

\begin{problem}\label{prob2} Is it true that for any finite partition $X=A_1\cup\dots\cup A_n$ of an  ideal $G$-space $(X,\I)$ some cell $A_i$ of the partition has
\begin{itemize}
\item $\cov(\Delta_\I(A_i))\le n$?
\item $\covJ(\Delta_\I(A_i))\le n$ for some non-trivial $G$-invariant Boolean ideal $\mathcal J$ on the acting group $G$?
\end{itemize}
\end{problem}

Problems~\ref{prob1} and \ref{prob2} can be reformulated in terms of the functions $\Phi_{G}(n)$, $\Phi_{(X,\I)}(n)$ defined as follows. For an ideal $G$-space $\mathbf X=(X,\I)$ and a (real) number $n\ge 1$ denote by $$X/n=\{\C\subset\PP(X):|\C|\le n,\;\cup\C=X\}$$ the family of all at most $n$-element covers of $X$ and put $\Phi_{\mathbf X}(n)=\sup_{\C\in X/n}\min_{C\in\C}\cov(\Delta_\I(C))$.
For each $G$-space $X$ we shall write $\Phi_{X}(n)$ instead of $\Phi_{(X,\{\emptyset\})}(n)$ (in this case we identify $X$ with the ideal $G$-space $(X,\{\emptyset\})$. In particular, for each group $G$ we put $\Phi_G(n)=\sup_{\A\in G/n}\min_{A\in\A}\cov(AA^{-1}).$

For every ideal $G$-space $\mathbf X=(X,\I)$ the definition of the number $\Phi_{\mathbf X}(n)$ implies that for any partition $X=A_1\cup \dots\cup A_n$ of $X$ there is a cell $A_i$ of the partition with  $\cov(\Delta_\I(A_i))\le \Phi_{\mathbf X}(n)$. This fact allows us to reformulate and extend  Problem~\ref{prob2} as follows.

\begin{problem}\label{prob3} Study the growth of the function $\Phi_{\mathbf X}(n)$ for a given ideal $G$-space $\mathbf X=(X,\I)$. Detect ideal $G$-spaces $\mathbf X$ with $\Phi_{\mathbf X}(n)\le n$ for all $n\in\IN$.
\end{problem}

Problems~\ref{prob1}--\ref{prob3} have many partial solutions, which can be divided into three categories corresponding to methods used in these solutions.

The first category contains results giving upper bounds on the function $\Phi_{\mathbf X}(n)$ proved by a combinatorial approach first exploited by Protasov and Banakh in \cite[\S12]{PB} and then refined by Erde \cite{Erde}, Slobodianiuk \cite{Slob} and Banakh, Ravsky, Slobodianiuk \cite{BRS}. These results are surveyed in Section~\ref{s2}.
The first non-trivial result proved by this approach was the upper bound $\Phi_{\mathbf X}(n)\le 2^{2^{n-1}-1}$ proved in Theorem 12.7 of \cite{PB} for groups $G$ endowed with the smallest ideal $\I=\{\emptyset\}$ and generalized later by Slobodianiuk (see \cite[4.2]{Prot}) and Erde \cite{Erde} to infinite groups $G$ endowed with the ideal $\I$ of finite subsets of $G$. Later Slobodianiuk \cite{Slob} using a tricky algorithmic approach, improved this upper bound to $\Phi_{\mathbf X}(n)\le n!$ for any ideal $G$-space $\mathbf X$. This algorithmic approach was developed by Banakh, Ravsky and Slobodianiuk \cite{BRS}
who proved the upper bound  $\Phi_{\mathbf X}(n)\le \varphi(n+1):=\max_{1<k<n}\sum_{i=0}^{n-k}k^i\le n!$, which is the best general upper bound on the function $\Phi_{\mathbf X}(n)$ available at the moment. The  function $\varphi(n)$ grows faster than any exponent $a^n$ but slower than the sequence of factorials $n!$. Unfortunately, it grows much faster than the identity function $n$ required in Problem~\ref{prob3}.

The second category of partial solutions of Problems~\ref{prob1}--\ref{prob3} exploits various $G$-invariant submeasures $\mu:\PP(X)\to[0,1]$ on a $G$-space $X$ and is presented in Sections~\ref{s3}--\ref{s9}. Given such a submeasure $\mu$, for any partition $X=A_1\cup\dots\cup A_n$ of a $G$-space $X$, we use the subadditivity of $\mu$ to select a cell $A_i$ with submeasure $\mu(A_i)\ge \frac1n$ and then use some specific properties of the submeasure $\mu$ to derive certain largeness property of the $\I$-difference set $\Delta_\I(A_i)$ for $G$-invariant Boolean ideals $\I\subset\{A\in\PP(X):\mu(A)=0\}$.

In Section~\ref{s4} we use for this purpose $G$-invariant finitely additive measures (which exist only on amenable $G$-spaces), and prove that for each partition $X=A_1\cup\dots\cup A_n$ of $G$-space with amenable acting group $G$, endowed with a non-trivial $G$-invariant Boolean ideal $\I\subset\PP(G)$, some cell $A_i$ of the partition has $\cov(\Delta_\I(A_i))\le n$, which is equivalent to saying that $\Phi_{(G,\I)}(n)\le n$ for all $n\in\IN$.

In Section~\ref{s5} we apply the extremal density $\is_{12}:\PP(X)\to[0,1]$, $\is_{12}:A\mapsto\inf_{\mu\in P_\w(G)}\sup_{\nu\in P_\w(X)}\mu*\nu(A)$, introduced in \cite{Sol} and studied in \cite{Ban}. On any $G$-space $X$ with amenable acting group $G$ the density $\is_{12}$ is subadditive and coincides with the upper Banach density $d^*$, well-known in Combinatorics of Groups (see e.g., \cite{HS}). Using the density $\is_{12}$ we show that each subset $A\subset X$ with positive density $\is_{12}(A)>0$ has $\cov(\Delta_\I(A))\le1/\is_{12}(A)$. In general, the density $\is_{12}$ is not subadditive, which does not allow to apply it directly to partitions of groups. However, its modification $\sis_{123}$ considered in Section~\ref{s9} is subadditive. Using this feature of the submeasure $\sis_{123}$ we prove that for each partition $X=A_1\cup\dots\cup A_n$ there is a cell $A_i$ of the partition and a finite set $E\subset G$ such that for each $G$-invariant Boolean ideal $\I\subset \{A\in\PP(X):\sis_{123}(A)=0\}$ the set
$\Delta_\I(A_i)^{\conj[E]}=\bigcup_{x\in E}x^{-1}\Delta_\I(A_i)x$ has $\cov(\Delta_I(A_i^{\conj[E]}))\le n$. This implies an affirmative answer to Problem~\ref{prob1} for partitions $G=A_1\cup\dots\cup A_n$ of groups into conjugation-invariant sets
$A_i=A_i^{\conj[G]}$.

The third group of partial solutions of Problems~\ref{prob1} and \ref{prob2} is presented in Section~\ref{s8} and exploits the compact right-topological semigroup $P(G)$ of measures on a group $G$. This approach is developed in a recent paper of Banakh and Fr\c aczyk \cite{BF} where they used minimal measures to prove that for each partition $X=A_1\cup\dots\cup A_n$ of an ideal $G$-space $(X,\I)$ either $\cov(\Delta_\I(A_i))\le n$ for all $i$ or else there is a cell $A_i$ of the partition such that $\cov(\Delta_\I(A_i)\cdot\Delta_\I(A_i))<n$ and $\covJ(\Delta_\I(A_i))<n$ for some $G$-invariant Boolean ideal $\mathcal J\not\ni\Delta_\I(A_i)$ on $G$. Using quasi-invariant idempotent measures, Banakh and Fr\c aczyk proved \cite{BF} that for any partition $G=A_1\cup\dots\cup A_n$ of a group $G$ either $\cov(A_iA_i^{-1})\le n$ for all $i$ or else $\cov(A_iA_i^{-1}\kern-1pt A_i)<n$ for some cell $A_i$ of the partition. We use these facts to prove that for each partition $G=A_1\cup \dots\cup A_n$ of a group $G$ for some cell $A_i$ of the partition, the product $(A_iA_i)^{4^{n-1}}$ is a subgroup of index $\le n$ in $G$.

In Section~\ref{s12n} we apply the density $\is_{12}$ to $\IP^*$-sets and show that for each subset $A$ of a group $G$ with positive density $\is_{21}(A)>0$ the set $AA^{-1}$ is an $\IP^*$-set in $G$ and hence belongs to every idempotent of the compact right-topological semigroup $\beta G$.

In the final Section~\ref{s12} we pose some open problems related to Problems~\ref{prob1}--\ref{prob3}.

\section{Some upper bounds on the function $\Phi_{\mathbf X}(n)$}\label{s2}

In this section we survey some results giving upper bounds on the function $\Phi_{\mathbf X}(n)$, which are proved by combinatorial and algorithmic arguments. Unfortunately, the obtained upper bounds are much higher than the upper bound $n$ required in Problem~\ref{prob3}.

We recall that for an ideal $G$-space $\mathbf X=(X,\I)$ the function $\Phi_{\mathbf X}(n)$ is defined by
$$\Phi_{\mathbf X}(n)=\sup_{\C\in X/n}\min_{A\in\C}\cov(\Delta_\I(A))\;\;\;\mbox{ for $n\in\IN$}.
$$
If $\I=\{\emptyset\}$, then we write $\Phi_X(n)$ instead of $\Phi_{(X,\{\emptyset\})}(n)$. In particular,
for a group $G$,
$$\Phi_{G}(n)=\sup_{\C\in G/n}\min_{A\in\C}\cov(AA^{-1})\;\;\;\mbox{ for $n\in\IN$}.
$$

Historically, the first non-trivial upper bound on the function $\Phi_{\mathbf X}(n)$ appeared in Theorem 12.7 of \cite{PB}.

\begin{theorem}[Protasov-Banakh]\label{t2.1} For any partition $G=A_1\cup\dots\cup A_n$ of a group $G$ some cell $A_i$ of the partition has $\cov(A_iA_i^{-1})\le 2^{2^{n-1}-1}$, which implies that $\Phi_G(n)\le 2^{2^{n-1}-1}$ for all $n\in\IN$.
\end{theorem}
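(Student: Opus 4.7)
The plan is to prove $\Phi_G(n)\le f(n):=2^{2^{n-1}-1}$ by induction on $n$. Writing the target as $f(n)=2\,f(n-1)^2$, one sees that the inductive step must essentially square the previous bound and pick up an extra factor of $2$.

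The first reduction is the cover--packing duality. For any subset $A\subset G$, a maximal family $F\subset G$ with the left translates $\{fA:f\in F\}$ pairwise disjoint (an \emph{$A$-packing}) automatically satisfies $G=FAA^{-1}$, since any $g\notin F$ has $gA$ meeting some $fA$, whence $g\in fAA^{-1}$. Consequently $\cov(AA^{-1})\le\pack(A)$, and the theorem reduces to producing a cell $A_i$ whose packing number is at most $f(n)$.

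The base case $n=1$ is immediate: $G=A_1$ yields $\cov(A_1A_1^{-1})=1=f(1)$. For the inductive step, argue by contradiction: assume every $A_i$ admits a packing of size exceeding $f(n)$. After left-translating the partition one may assume $e\in A_1$, and fix an $A_1$-packing $F$ with $|F|>f(n)$. For each $f\in F\setminus\{e\}$, disjointness of $fA_1$ from $A_1$ gives $fA_1\subseteq A_2\cup\dots\cup A_n$, i.e.\ an $(n-1)$-partition $A_1=\bigsqcup_{j=2}^{n}(A_1\cap f^{-1}A_j)$. Pigeonholing $F$ into a single cell $A_k$ of the original partition, and then selecting two elements $f_1,f_2\in F\cap A_k$ and intersecting the two associated $(n-1)$-partitions, yields a refinement of $A_1$ whose blocks can be fed to the induction hypothesis. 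Each such block produces a cover of $A_1$ by at most $f(n-1)$ translates of some difference set $A_jA_j^{-1}$; a second iteration, coupled with the symmetry $A^{-1}A$ vs.\ $AA^{-1}$ that accounts for the extra factor of $2$, should force $|F|\le 2\,f(n-1)^2=f(n)$, contradicting the choice of $F$.

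The main obstacle will be reconciling the induction hypothesis, stated for partitions of the \emph{whole} group $G$, with the localized $(n-1)$-partitions of the single cell $A_1$ produced above. The natural remedy is to lift each local partition to a full $G$-partition via the product coloring $g\mapsto(c(g),c(fg))$, where $c:G\to\{1,\dots,n\}$ is the original coloring; this produces at most $n^2$ classes, which must then be pruned to $n-1$ classes by careful identification that exploits the fact that all the relevant cells avoid $A_1$. Controlling this lift while tracking the exact multiplicative overhead (namely the factor $2$ beyond the weaker $f(n-1)^2$ alone) is the delicate combinatorial heart of the argument.
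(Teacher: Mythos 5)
Your proposal is a plan, not a proof, and the gap you flag yourself is not a loose end but the central step. The cover--packing reduction $\cov(AA^{-1})\le\pack(A)$ and the base case are correct (one small slip: the disjointness $fA_1\cap A_1=\emptyset$ for $f\in F\setminus\{e\}$ requires $e\in F$, not $e\in A_1$; both normalizations are harmless, but you conflate them). The genuine problem is that the induction hypothesis speaks about $(n-1)$-partitions of the \emph{entire} group and yields covers of $G$, whereas the partitions you manufacture, $A_1=\bigcup_{j=2}^{n}(A_1\cap f^{-1}A_j)$, partition only the single cell $A_1$. You do not actually close this gap: the product-coloring lift $g\mapsto(c(g),c(fg))$ produces $n^2$ classes, the promised ``careful identification'' bringing these down to $n-1$ is asserted but never exhibited, and the sentence that a second iteration ``should force'' $|F|\le 2f(n-1)^2$ concedes that the decisive inequality is not derived.

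Two further signs that the mechanism is off: the appeal to a ``symmetry $A^{-1}A$ vs.\ $AA^{-1}$'' to account for the extra factor of $2$ is unexplained and does not match the structure of the problem, and the recurrence you are targeting, $f(n)=2f(n-1)^2$, is not the one that the combinatorics actually produces. The paper's Theorem~\ref{t2.2} records the sharper bound coming out of the proof in \cite{PB}: $u(1)=1$, $u(n+1)=u(n)(u(n)+1)$, from which $u(n)\le 2^{2^{n-1}-1}$ follows by an easy estimate. The additive term $u(n)$ in $u(n)^2+u(n)$ is not a ``factor of $2$,'' and chasing $2f(n-1)^2$ is likely to lead you away from the argument that works. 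Until you supply a concrete bridge from the local $(n-1)$-partitions of $A_1$ to a bound on $|F|$, the inductive step, and hence the proof, is missing.
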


In fact, the proof of Theorem~12.7 from \cite{PB} gives a bit better upper bound than $2^{2^{n-1}-1}$, namely:

\begin{theorem}[Protasov-Banakh]\label{t2.2} For any partition $G=A_1\cup\dots\cup A_n$ of a group $G$ some cell $A_i$ of the partition has $\cov(A_iA_i^{-1})\le u(n)$ where $u(1)=1$ and $u(n+1)=u(n)(u(n)+1)$ for $n\in\IN$. Consequently, $\Phi_G(n)\le u(n)$ for all $n\in\IN$.
\end{theorem}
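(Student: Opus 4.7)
I would approach this by induction on $n$. The base case $n=1$ is immediate, since $A_1 = G$ forces $A_1 A_1^{-1} = G$ and hence $\cov(A_1 A_1^{-1}) = 1 = u(1)$. For the inductive step, I would fix a partition $G = A_1 \cup \cdots \cup A_{n+1}$ and assume the bound $u(n)$ for every $n$-cell partition. Since the difference sets $A_i A_i^{-1}$ are unchanged when we simultaneously replace every $A_i$ by $A_i a^{-1}$ for a common $a \in A_1$, one may further assume $e \in A_1$.

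The central combinatorial tool I would rely on is the following standard greedy construction: whenever $\cov(AA^{-1}) > k$, one can pick elements $g_0 = e, g_1, \ldots, g_k \in G$ so that the translates $g_0 A, g_1 A, \ldots, g_k A$ are pairwise disjoint, using at step $j$ that $\bigcup_{i \le j} g_i A A^{-1}$ is a proper subset of $G$ and that any $g_{j+1}$ outside that union automatically satisfies $g_{j+1}A \cap g_iA = \emptyset$ for every $i \le j$. My plan then is a dichotomy: apply the inductive hypothesis to an $n$-cell coarsening of the partition, for instance the one obtained by merging two cells $A_i$ and $A_j$, to produce a cell of that coarsening with cover at most $u(n)$. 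If that cell is any of the original unmerged $A_\ell$, the induction closes immediately with room to spare, since $u(n) \le u(n+1)$. The hard case is when the merged cell is the one with small cover, giving $G = F(A_i \cup A_j)(A_i \cup A_j)^{-1}$ for some $F \subset G$ with $|F| \le u(n)$.

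In this hard case the goal is to convert the covering by the merged difference set into a covering by $A_\ell A_\ell^{-1}$ for a single $\ell \in \{i,j\}$, at a multiplicative cost of at most $u(n)+1$, giving exactly the target bound $u(n)(u(n)+1) = u(n+1)$. This is where the greedy tool would enter: if both $\cov(A_i A_i^{-1})$ and $\cov(A_j A_j^{-1})$ exceeded $u(n+1)$, one would in particular find $u(n)+1$ pairwise disjoint translates of, say, $A_i$, and then distribute the ``mixed'' pieces $A_i A_j^{-1}$, $A_j A_i^{-1}$, $A_i A_i^{-1}$ of $(A_i \cup A_j)(A_i \cup A_j)^{-1}$ across these translates so as to extract a covering by $A_j A_j^{-1}$ alone. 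The hard part will be carrying out exactly this conversion: verifying that the $u(n)+1$ pairwise disjoint translates of $A_i$ really do suffice to reduce a covering of $G$ by translates of $(A_i \cup A_j)$-differences to one by translates of $A_j$-differences at the advertised multiplicative cost, and no more. Executing this bookkeeping precisely is what forces the peculiar recurrence $u(n+1) = u(n)(u(n)+1)$, and I expect it to be the central technical obstacle of the proof.
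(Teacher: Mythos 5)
Your outline correctly identifies the greedy tool (if $\cov(AA^{-1})>k$ then one can extract $k+1$ pairwise disjoint left translates of $A$) and proceeds by induction, but the proposal is a roadmap with a genuine hole at exactly the place you flag. The paper itself gives no proof here — it cites Theorem 12.7 of \cite{PB} — and the argument there does not merge two cells of the original partition. Roughly, \cite{PB} works with the single cell $A_{n+1}$ of supposedly large covering number, extracts $u(n)+1$ pairwise disjoint translates $g_0A_{n+1},\dots,g_{u(n)}A_{n+1}$ with $g_0=e$, and uses the fact that each $g_kA_{n+1}$ with $k\ge1$ sits inside $A_1\cup\dots\cup A_n$ to reassign a shifted copy of $A_{n+1}$ to the remaining $n$ cells, yielding a new $n$-cell partition to which the inductive hypothesis applies. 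The crucial structural feature there is that the piece glued onto $A_i$ is contained in $g_k^{-1}A_i$, i.e.\ a translate of a subset of $A_i$ itself, which is what makes the difference-set bookkeeping manageable. In your version the piece glued onto $A_i$ is an arbitrary other cell $A_j$, and there is no a priori relation between $A_i$ and $A_j$ to exploit.

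Concretely, the hard case does not close as described. You have $G=F(A_i\cup A_j)(A_i\cup A_j)^{-1}$ with $|F|\le u(n)$ and want to cover $G$ by at most $u(n)(u(n)+1)$ left translates of a single $A_\ell A_\ell^{-1}$. Among the four pieces of $(A_i\cup A_j)(A_i\cup A_j)^{-1}$ is the block $A_iA_i^{-1}$, and to absorb it into left translates of $A_jA_j^{-1}$ you would need a containment such as $A_i\subset KA_j$ for a small set $K$. The disjoint translates $g_0A_i,\dots,g_{u(n)}A_i$ do not deliver this: for $k\ge1$ one only gets $g_kA_i\subset G\setminus A_i = \bigcup_{\ell\ne i}A_\ell$, not $g_kA_i\subset A_j$, so the translates scatter across all the other cells. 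Even in the friendliest situation ($n+1=2$, so $g_kA_i\subset A_j$ is forced) the natural containments $A_iA_j^{-1}\subset g_1^{-1}A_jA_j^{-1}$ and $A_iA_i^{-1}\subset g_1^{-1}A_jA_j^{-1}g_1$ produce \emph{right} translates of $A_jA_j^{-1}$, which do not by themselves assemble into a bound on $\cov(A_jA_j^{-1})$ defined via left translates. So the step you call ``the central technical obstacle'' is precisely where the proof is missing, and the distribution argument as stated would need a different mechanism (or a different choice of what to merge) to work.
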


Observe that the sequence $u(n)$ has double exponential growth
$$2^{2^{n-2}}\le u(n)\le 2^{2^{n-1}-1}\mbox{ \ for \ $n\ge 2$}.$$
The method of the proof of Theorem~\ref{t2.2} works also for ideal $G$-spaces (see \cite{Erde}) which allows us to obtain the following generalization of Theorem~\ref{t2.2}.

\begin{theorem}\label{t2.3} For any partition $X=A_1\cup\dots\cup A_n$ of an ideal $G$-space $\mathbf X=(X,\I)$ some cell $A_i$ of the partition has $\cov(\Delta_\I(A_i))\le u(n)\le 2^{2^{n-1}-1}$ where $u(1)=1$ and $u(n+1)=u(n)(u(n)+1)$ for $n\in\IN$.
This implies $\Phi_{\mathbf X}(n)\le u(n)$ for all $n\in\IN$.
 \end{theorem}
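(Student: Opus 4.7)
The plan is to prove the theorem by induction on $n$, following the combinatorial method of Protasov--Banakh adapted to ideal $G$-spaces by Erde. It is convenient to track the following strengthening throughout the induction: \emph{for every $n$-cover $X = A_1 \cup \dots \cup A_n$ of an ideal $G$-space $(X, \I)$, some cell $A_i$ admits a set $F \subset G$ with $|F| \le u(n)$ and $X = FA_i$ modulo $\I$ (i.e.\ $X \setminus FA_i \in \I$)}. The strengthening implies Theorem~\ref{t2.3}: for any $g \in G$, the shift $gA_i \subset X = FA_i$ modulo $\I$ gives $gA_i \setminus FA_i \in \I$, so $gA_i \cap fA_i \notin \I$ for some $f \in F$ (assuming $A_i \notin \I$), yielding $f^{-1}g \in \Delta_\I(A_i)$; hence $g \in F\Delta_\I(A_i)$ and $\cov(\Delta_\I(A_i)) \le |F| \le u(n)$. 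The base case $n = 1$ is immediate with $F = \{e\}$.

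For the inductive step, set $m = u(n)$ and take an $(n+1)$-cover $X = A_1 \cup \dots \cup A_{n+1}$. If some cell already satisfies the strengthened claim with $|F| \le m$, the induction closes, so assume not. A greedy procedure then produces elements $h_0, h_1, \dots, h_m \in G$ with pairwise $\I$-disjoint translates $h_jA_{n+1} \cap h_kA_{n+1} \in \I$ for $j \ne k$: at each stage $k < m$ the failure of the strengthened claim at level $k+1$ for $A_{n+1}$ supplies a point $x$ outside $\{h_0, \dots, h_k\}A_{n+1}$, from which, using the group action, one constructs the next $h_{k+1}$ while preserving the pairwise $\I$-disjointness.

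Write $H = \{h_0, \dots, h_m\}$ and $Z = \bigcup_{j \ne k} h_jA_{n+1} \cap h_kA_{n+1}$, a finite union of members of $\I$, hence in $\I$. For each $x \in X \setminus Z$ the point $x$ lies in $h_jA_{n+1}$ for at most one $j$, so the translated covers $X = \bigcup_{i=1}^{n+1} h_jA_i$ place $x$ in $\bigcup_{i \le n} h_jA_i$ for all but at most one value of $j$. Setting $B_i = HA_i = \bigcup_{j=0}^{m} h_jA_i$ for $i = 1, \dots, n$, we obtain $X = B_1 \cup \dots \cup B_n$ modulo $\I$; absorbing $Z \in \I$ into $B_1$ yields an honest $n$-cover of $X$. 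Applying the strengthened inductive hypothesis to this $n$-cover produces an index $i_0$ and a set $F'$ with $|F'| \le m$ and $X = F'B_{i_0}$ modulo $\I$; substituting $B_{i_0} = HA_{i_0}$ gives $X = (F'H)A_{i_0}$ modulo $\I$ with $|F'H| \le m(m+1) = u(n+1)$, closing the induction.

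The main obstacle is the greedy construction of the $h_j$'s: one must convert the failure of the strengthened claim at level $m$ (no $m$ translates of $A_{n+1}$ cover $X$ modulo $\I$) into the explicit existence of $m+1$ pairwise $\I$-disjoint shifts of $A_{n+1}$. This step intertwines the group action on $X$ with the structure of $\I$ and, together with the verification that choosing $h_{k+1}$ from an uncovered point preserves the pairwise $\I$-disjointness with respect to the earlier $h_j$'s, constitutes the technical heart of the Protasov--Banakh--Erde argument. Once this greedy step is established, the remainder of the proof is a straightforward substitution.
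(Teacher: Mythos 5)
Your proposed inductive invariant is false, so the argument cannot be repaired along the lines you sketch. You claim that for every $n$-cover $X=A_1\cup\dots\cup A_n$ of an ideal $G$-space $(X,\I)$ some cell $A_i$ satisfies $X=FA_i$ modulo $\I$ with $|F|\le u(n)$ — in other words, some cell is left-syndetic modulo $\I$ with bound $u(n)$. Already for $n=2$ this fails: take $X=G=\IZ$ with $\I=\{\emptyset\}$ and the partition $A_1=\{m\in\IZ:m<0\}$, $A_2=\{m\in\IZ:m\ge 0\}$. For any finite $F\subset\IZ$ one has $FA_1=\{m:m<\max F\}$ and $FA_2=\{m:m\ge\min F\}$, so neither cell is covered by \emph{any} finite number of translates, let alone $u(2)=2$ of them. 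The theorem itself is fine here, since $\Delta_{\{\emptyset\}}(A_1)=A_1-A_1=\IZ$ and $\cov(\IZ)=1\le u(2)$; the point is that $\cov(\Delta_\I(A_i))\le k$ is a statement about covering the \emph{difference set} $\Delta_\I(A_i)$, which is much weaker than covering $X$ by translates of $A_i$. Your own derivation that the strengthening implies the theorem is correct, which is exactly why the strengthening is strictly stronger than the theorem and therefore not a legitimate inductive invariant. Compare also Proposition~\ref{p13.1} of the paper, which only asserts that some $FA_i$ is $m$-thick with $|F|\le m^{n-1}$ growing in $m$; there is no bound of the form $X=FA_i$ with $|F|$ finite.

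The greedy step, which you identify as ``the technical heart,'' is broken for the same conceptual reason. From the failure of ``$X=\{h_0,\dots,h_k\}A_{n+1}$ modulo $\I$'' you only learn that $X\setminus\{h_0,\dots,h_k\}A_{n+1}\notin\I$; a point $x$ in this complement satisfies $h_j^{-1}x\notin A_{n+1}$ for $j\le k$, and this gives you no translate of $A_{n+1}$ that is $\I$-disjoint from the $h_jA_{n+1}$'s. (In $\IZ$ with $A=\IZ_{\ge 0}$ and $h_0=0$: the point $-1$ lies outside $A$, but every translate $h_1+A=\IZ_{\ge h_1}$ meets $A$ in an infinite set, so no second disjoint translate exists.) The correct dichotomy in the Protasov--Banakh argument runs through the packing index $\Ipack$, not the covering number of $A_{n+1}$ itself: either $\Ipack(A_{n+1})\le m$, in which case Proposition~\ref{p5.2n} gives $\cov(\Delta_\I(A_{n+1}))\le m$ directly (this is a maximality/Zorn argument, not a greedy one), or $\Ipack(A_{n+1})>m$, in which case $m+1$ pairwise $\I$-almost-disjoint translates exist by the very definition of $\Ipack$. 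The subsequent reduction to an $n$-cover then has to produce a bound on $\cov(\Delta_\I(A_j))$ for some $j\le n$, not a bound on $\cov(A_j)$; your substitution $B_{i_0}=HA_{i_0}$ and the estimate $|F'H|\le m(m+1)$ give the arithmetic shape of the recursion but attach it to the wrong quantity. You need to replace the covering-based invariant by one phrased in terms of $\Ipack$ or $\cov(\Delta_\I(\cdot))$ and redo the passage from the $n$-cover $\{B_j\}$ back to the original cells accordingly.
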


In \cite{Slob} S.Slobodianiuk invented a method allowing to replace the upper bound $u(n)$ in Theorem~\ref{t2.3} by a function $\hbar(n)$, which grows slower that $n!$. To define the function $\hbar(n)$ we need to introduce some notation.

Given an natural number $n$ denote by $\w^n$ the set of all functions defined on the set $n=\{0,\dots,n-1\}$ and taking values in the set $\w$ of all finite ordinals. The set $\w^n$ is endowed with a partial order in which $f\le g$ if and only if $f(i)\le g(i)$ for all $i\in n$.
For an index $i\in n$ by $\chi_i:n\to\{0,1\}$ we denote the characteristic function of the singleton $\{i\}$, which means that $\{i\}=\chi_i^{-1}(1)$.

For subsets $A_0,\dots,A_{n-1}\subset\w^n$ let
$$\sum_{i\in n}A_i=\Big\{\sum_{i\in n}a_i:\forall i\in n\;\;a_i\in A_i\Big\}$$
be the pointwise sum of these sets.

Now given any function $h\in\w^n$ we define finite subsets $h^{[m]}(i)\subset \w^n$, $i\in n$, $m\in\w$, by the recursive formula:
\begin{itemize}
\item $h^{[0]}(i)=\{\chi_i\}$;
\item $h^{[m+1]}(i)=h^{[m]}(i)\cup\{x-x(i)\chi_i:x\in\sum_{i\in n}h^{[m]}(i),\;x\le h\}$ for $m\in\w$.
\end{itemize}
A function $h\in\w^n$ is called {\em $0$-generating} if the constant zero function $\mathbf 0:n\to\{0\}$ belongs to the set $h^{[m]}(i)$ for some $i\in n$ and $m\in\w$.

Now put $\hbar(n)$ be the smallest number $c\in\w$ for which the constant function $h:n\to\{c\}$ is 0-generating. The function $\hbar(n)$ coincides with the function $s_{-\infty}(n)$ considered and evaluated in \cite{BRS}. The proof of the following theorem (essentially due to Slobodianiuk) can be found in \cite{BRS}.

\begin{theorem}[Slobodianiuk]\label{t2.4} For any partition $X=A_1\cup\dots\cup A_n$ of an ideal $G$-space $\mathbf X=(X,\I)$ some cell $A_i$ of the partition has $\cov(\Delta_\I(A_i))\le\hbar(n)$. This implies that $\Phi_{\mathbf X}(n)\le \hbar(n)$ for all $n\in\IN$.
\end{theorem}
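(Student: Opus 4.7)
The plan is to argue by contradiction. I would assume every cell $A_i$ has $\cov(\Delta_\I(A_i)) > c := \hbar(n)$, so that the constant function $h : n \to \{c\}$ satisfies $\cov(\Delta_\I(A_i)) > h(i)$ uniformly in $i$, and aim to derive an impossibility from the $0$-generating property of $h$. The fundamental combinatorial tool I would establish first is a greedy packing lemma: if $\cov(\Delta_\I(A)) > k$, then there exist $g_0,\dots,g_k \in G$ with the shifts $g_0A,\dots,g_kA$ pairwise $\I$-disjoint, because at each stage $j$ the hypothesis $\cov(\Delta_\I(A)) > j+1$ forces $g_{j+1}\notin\bigcup_{l\le j} g_l\Delta_\I(A)$, which in turn gives $g_lA\cap g_{j+1}A\in\I$ for every $l\le j$.

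The heart of the proof is a statement that I would prove by induction on $m\in\w$: for every $i\in n$ and every $v\in h^{[m]}(i)$, one can realize in $X$ a pairwise $\I$-disjoint family of cell-shifts matching the multiplicity profile $v$ (that is, $v(j)$ disjoint shifts of $A_j$ for each $j\in n$), equipped with auxiliary data certifying that one further $\I$-disjoint shift of $A_i$ can always be appended. The base case $m=0$, $v=\chi_i$, asks for a single shift of $A_i$ plus extension capability, which follows from the greedy lemma together with the trivial choice $g_0 = e$. For the recursive step, given $v = y - y(i)\chi_i$ with $y = \sum_j v_j$, $v_j\in h^{[m]}(j)$, and $y\le h$, the inductive hypothesis supplies packings realizing each $v_j$; these are merged into a single pairwise $\I$-disjoint family by extending coordinate by coordinate via the greedy lemma, using $y\le h$ to guarantee that the number of shifts demanded in each column never exceeds the greedy budget $c$.

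Since $h$ is $0$-generating by the very choice of $c = \hbar(n)$, there exist $i_*$ and $m_*$ with $\mathbf 0\in h^{[m_*]}(i_*)$. Feeding $v = \mathbf 0$ into the inductive statement yields a configuration in $X$ — an empty packing endowed with the ``one further $A_{i_*}$-shift'' property — whose existence, traced back through the recursion, manufactures a pairwise $\I$-disjoint collection of shifts whose total multiplicity violates the starting assumption $\cov(\Delta_\I(A_{i_*})) > c$. Hence at least one cell $A_i$ must satisfy $\cov(\Delta_\I(A_i))\le\hbar(n)$, which is the desired conclusion.

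The main obstacle is pinning down the precise form of the inductive invariant and executing the merging of sub-packings so that the arithmetic operations on $\w^n$ — pointwise sum followed by zeroing the $i$-th coordinate, under the side constraint $y\le h$ — translate faithfully into geometric operations on pairwise $\I$-disjoint families of shifts in $X$. The closure of $\I$ under finite unions and repeated appeals to the greedy extension lemma are the technical engines; the delicate part is arguing that zeroing the $i$-th coordinate corresponds to a legitimate ``discharge'' of the $A_i$-column in the current configuration while preserving pairwise $\I$-disjointness across all the other columns. This is where the algorithmic bookkeeping of Slobodianiuk \cite{Slob}, refined in \cite{BRS}, does the essential combinatorial work.
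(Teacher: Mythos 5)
The paper does not prove Theorem~\ref{t2.4}: it is stated and attributed to \cite{BRS}, so there is no internal argument here to compare yours against. Judged on its own terms, your greedy packing lemma is correct — since $\Delta_\I(A)$ is symmetric and $g_\ell^{-1}g\in\Delta_\I(A)$ iff $g_\ell A\cap gA\notin\I$, the hypothesis $\cov(\Delta_\I(A))>k$ lets one greedily pick $g_0,\dots,g_k$ with the $g_\ell A$ pairwise $\I$-disjoint — and some lemma of this type is indeed the interface between covering numbers and packings that any proof must use.

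However, the passage from the $0$-generating property of $h$ to a contradiction has a genuine gap. Your inductive invariant, ``for $v\in h^{[m]}(i)$ one can realize a pairwise $\I$-disjoint family with multiplicity profile $v$, together with the ability to append one further $\I$-disjoint shift of $A_i$,'' is \emph{vacuously satisfiable} at $v=\mathbf 0$: the empty family is trivially $\I$-disjoint and the extension capability is just the greedy lemma again, so no impossibility arises. ``Traced back through the recursion'' is not an argument; for the $0$-generating condition to bite you need either a quantitative payload that the recursion accumulates and that $\mathbf 0$ collapses to an absurdity, or an invariant of a different shape (e.g.\ one that directly bounds $\cov(\Delta_\I(A_i))$ in terms of the current profile and $h$, so that reaching $\mathbf 0$ forces the bound $\le\hbar(n)$ outright rather than yielding a contradiction). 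The merging step is also unjustified as stated: the inductive hypothesis produces separate $\I$-disjoint families, one for each $j$, but making their union pairwise $\I$-disjoint requires translating whole sub-packings coherently, and the side condition $y\le h$ by itself does not evidently supply such shifts. You acknowledge that ``the algorithmic bookkeeping of Slobodianiuk does the essential combinatorial work'' — but that bookkeeping \emph{is} the content of the theorem, so the proposal as written is a plausible-looking outline rather than a proof.
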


The growth of the sequence $\hbar(n)$ was evaluated in \cite{BRS} with help of the functions
$$\varphi(n)=\max_{0<k<n}\sum_{i=0}^{n-k-1}k^i=\max_{1<k<n}\frac{k^{n-k}-1}{k-1}\mbox{ \ and \ } \phi(n)=\sup_{1<x<n}\frac{x^{n-x}-1}{x-1}.$$

\begin{theorem}[Banakh-Ravsky-Slobodianiuk]\label{t2.5n} For every $n\ge 2$ we get
$$\varphi(n)\le\phi(n)<\hbar(n)\le \varphi(n+1)\le \phi(n+1).$$
\end{theorem}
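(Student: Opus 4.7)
The plan is to break the chain $\varphi(n)\le\psi(n)<\hbar(n)\le\varphi(n+1)\le\phi(n+1)$ into four separate inequalities and handle each by its own argument about the $0$-generating property of constant functions $h\in\w^n$.

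The easiest piece is $\varphi(n+1)\le\phi(n+1)$: both quantities are max/sup of the same function $x\mapsto(x^{n+1-x}-1)/(x-1)$, but $\varphi$ ranges only over integer $k\in\{2,\dots,n\}$ while $\phi$ ranges over real $x\in(1,n+1)$, so the inequality is immediate from the definitions.

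For the upper bound $\hbar(n)\le\varphi(n+1)$, I would show that the constant function $h\equiv\varphi(n+1)$ on $n=\{0,\dots,n-1\}$ is $0$-generating by designing an explicit iterative procedure. Starting from $\{\chi_i:i\in n\}$, one repeatedly forms pointwise sums $x=\sum_{i\in n}a_i$ satisfying $x\le h$ and applies the zeroing step $x\mapsto x-x(i)\chi_i$. The natural strategy is: pick a target coordinate $j$, pump up $x(j)$ while respecting $x\le h$ everywhere, then zero out the $j$-th coordinate and continue recursively with the remaining $n-1$ coordinates. A careful accounting of how fast one coordinate can be inflated at the expense of the others gives a worst-case requirement of the geometric form $\sum_{p=0}^{n-k-1}k^p$ when exactly $k$ coordinates remain active at some stage; taking the maximum over $k$ yields the bound $\varphi(n+1)$.

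For the lower bounds $\varphi(n)\le\psi(n)<\hbar(n)$, the task is to construct \emph{obstructions} showing that for small $c$ no allowed sequence of operations starting from $\{\chi_i\}$ can reach the zero function $\mathbf 0$. The natural device is a potential $w:\w^n\to\mathbb{R}_{\ge 0}$ satisfying $w(\mathbf 0)=0<w(\chi_i)$ for each $i$, non-increasing under $x\mapsto x-x(i)\chi_i$ and superadditive under pointwise sums, bounded appropriately on $\{x\in\w^n:x\le h\}$. Any such potential forces every element of $h^{[m]}(i)$ to carry positive weight and so prevents $\mathbf 0$ from ever appearing, witnessing $c<\hbar(n)$. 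The intermediate quantity $\psi(n)$ from \cite{BRS} is then presumably defined as the largest $c$ admitting such a witness, and $\varphi(n)\le\psi(n)$ would follow by exhibiting an explicit weight family realizing the max defining $\varphi$, while the strict inequality $\psi(n)<\hbar(n)$ amounts to comparing the optimal obstruction against the algorithmic upper bound.

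The hardest step will clearly be the lower bound: designing a potential that is simultaneously monotone under the subtraction step, superadditive under pointwise sums, and compatible with the constraint $x\le h$ is delicate and, I expect, requires the detailed combinatorial bookkeeping carried out in \cite{BRS}. By contrast, the upper bound is intricate but essentially algorithmic, and the outermost inequality is a triviality from comparing max over integers with sup over reals.
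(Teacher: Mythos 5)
The paper does not actually prove this theorem: it is stated by attribution to Banakh--Ravsky--Slobodianiuk and the reader is referred to \cite{BRS} for the argument. More importantly for your proposal, the function $\psi(n)$ that appears in the middle of the chain is \emph{never defined anywhere in this paper} --- it occurs only in the statement of Theorem~\ref{t2.5n} and is inherited from \cite{BRS}. Your proposal recognizes this and tries to paper over it by guessing that $\psi(n)$ is ``the largest $c$ admitting an obstruction potential,'' but that is pure speculation; you cannot prove $\varphi(n)\le\psi(n)<\hbar(n)$ for a quantity whose definition you do not have, and nothing in the present text lets you reverse-engineer it (the table in Section~\ref{s2} lists $1+\lfloor\phi(n)\rfloor$, which is numerically distinct from a candidate $\psi$ as well --- e.g.\ at $n=2$ it equals $\hbar(2)$, contradicting the strict inequality). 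This is a genuine gap, not a matter of missing detail.

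Beyond that, the pieces you can address are handled sensibly but incompletely. The comparison $\varphi(n+1)\le\phi(n+1)$ is indeed immediate from comparing a max over integers with a sup over reals, though you should note that the $k=1$ term in $\varphi(n+1)=\max_{0<k\le n}\sum_{p=0}^{n-k}k^p$ is only matched in $\phi(n+1)$ as the limit $x\to 1^+$ of $(x^{n+1-x}-1)/(x-1)$, which the sup still dominates. Your sketch for $\hbar(n)\le\varphi(n+1)$ (show the constant function $h\equiv\varphi(n+1)$ is $0$-generating by iteratively pumping one coordinate, zeroing it, and recursing) is the right kind of idea but only gestures at the ``careful accounting'' that yields the exact bound $\sum_{p=0}^{n-k}k^p$; the worked-out algorithm is the substance of the result in \cite{BRS}. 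And for the lower bound $\psi(n)<\hbar(n)$ (or any lower bound on $\hbar$), a superadditive monotone potential is a plausible device, but you have not constructed one, so there is no actual proof here either. In short: one inequality verified, one plausibly sketched but unproved, and two blocked by an undefined object.
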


The growth of the function $\phi(n)$ was evaluated in \cite{BRS} with help of the Lambert W-function, which is inverse  to the function $y=xe^x$. So, $W(y)e^{W(y)}=y$ for each positive real numbers $y$.
It is known \cite{Lambert} that at infinity the Lambert W-function $W(x)$ has asymptotical growth
$$W(x)=L-l+\frac{l}{L}+\frac{l(-2+l)}{2L^2}+\frac{l(6-9l+2l^2)}{6L^3}+\frac{l(-12+36l-22l^2+3l^3)}{12L^4}+
O\Big[\Big(\frac{l}{L}\Big)^5\Big]$$
where $L=\ln x$ and $l=\ln\ln x$.

The growth of the sequence $\phi(n+1)$ was evaluated in \cite{BRS} as follows:

\begin{theorem}[Banakh]\label{bound:phi} For every $n>50$
$$nW(ne)-2n+\frac{n}{W(ne)}+\frac{W(ne)}{n}<\ln \phi(n+1)<nW(ne)-2n+\frac{n}{W(ne)}+\frac{W(ne)}{n}+\frac{\ln\ln (ne)}{n}$$
and hence $$\ln\phi(n+1)=n\ln n-n-n\big(\ln\ln(n)+O(\tfrac{\ln\ln n}{\ln n})\big).$$
\end{theorem}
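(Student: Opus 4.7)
The function $\phi(n+1)$ is the supremum over $x\in(1,n+1)$ of $g(x):=\frac{x^{n+1-x}-1}{x-1}$, so the plan is to perform an asymptotic analysis of $g$ and carefully locate its interior maximum. For $x$ bounded away from $1$ one has the identity
$$\frac{x^{n+1-x}-1}{x-1} = \frac{x^{n-x}\bigl(1-x^{x-n-1}\bigr)}{1-1/x},$$
whose logarithm admits the uniform expansion
$$\ln g(x) = (n-x)\ln x + \tfrac{1}{x} + O\!\left(\tfrac{1}{x^{2}}\right),$$
with the error negligible because the optimum will turn out to occur at $x$ of order $n/\ln n$.

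The first step is to optimize the leading expression $F(x):=(n-x)\ln x + 1/x$. Solving $F'(x)=0$, and discarding the lower-order contribution coming from the derivative of $1/x$, reduces to $x\ln(ex)=n$. The substitution $y=\ln(ex)$ converts this into $ye^{y}=ne$, whose unique positive solution is $y=W(ne)$. Thus the critical point is $x^{\ast}=n/W(ne)$ with $\ln x^{\ast}=W(ne)-1$, and direct substitution gives
$$F(x^{\ast})=\Bigl(n-\tfrac{n}{W(ne)}\Bigr)\bigl(W(ne)-1\bigr)+\tfrac{W(ne)}{n}=nW(ne)-2n+\tfrac{n}{W(ne)}+\tfrac{W(ne)}{n},$$
which is precisely the expression appearing on both sides of the stated sandwich inequality. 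Evaluating $\ln g$ at $x=x^{\ast}$ and invoking the expansion above establishes the lower bound, provided $n>50$ so that the absorbed $O(1/x^{2})$ error stays strictly below $\ln\ln(ne)/n$.

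For the upper bound I would exploit the strong concavity of $F$ near the maximizer: $F''(x) = -n/x^{2} + O(1/x^{3})$, so at $x^{\ast}$ one has $F''(x^{\ast}) \asymp -W(ne)^{2}/n$. This forces $\sup_{x}F(x) - F(x^{\ast})$ to be of order at most $1/n$, and combined with the $O(1/x^{2})=O(W(ne)^2/n^2)$ discrepancy between $\ln g$ and $F$ it bounds $\sup\ln g - \ln g(x^{\ast})$ by $\ln\ln(ne)/n$ for $n>50$. One must also rule out that the supremum is approached as $x\to 1^{+}$ or $x\to(n+1)^{-}$; this follows from direct estimates at the endpoints, where $g$ is much smaller than $g(x^{\ast})$.

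Finally, the consequent asymptotic formula is obtained by feeding the standard expansion $W(ne)=\ln n+1-\ln\ln n+O(\ln\ln n/\ln n)$ into the dominant term $nW(ne)-2n$, which yields $n\ln n-n-n\ln\ln n+O(n\ln\ln n/\ln n)$, while the corrections $n/W(ne)\sim n/\ln n$ and $W(ne)/n\sim \ln n/n$ are absorbed into the error. The main obstacle I anticipate is the bookkeeping in the upper-bound step: one has to quantify how far the true maximizer lies from $x^{\ast}$ and then propagate this displacement through both the quadratic loss in $F$ and the $\ln g$-versus-$F$ discrepancy. This is routine calculus, but tedious enough to motivate the explicit hypothesis $n>50$.
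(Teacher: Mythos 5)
Your plan is the intended argument. The paper states Theorem~\ref{bound:phi} without proof (citing \cite{BRS}), but the explicit form of the bound — $nW(ne)-2n+n/W(ne)+W(ne)/n$ is precisely $F(x^*)$ for $F(x)=(n-x)\ln x+1/x$ evaluated at $x^*=n/W(ne)$ — uniquely identifies the Lambert-$W$ optimization you describe, and your algebra confirming $F(x^*)$ is correct, as is the derivation of the final asymptotic from the standard expansion of $W$.

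Two points of bookkeeping should be straightened out before this is a proof. The justification you give for the lower bound is the wrong one: the strict inequality $\ln g(x^*)>F(x^*)$ has nothing to do with $\ln\ln(ne)/n$; it holds because $\ln g(x)-F(x)=\sum_{k\ge 2}\tfrac1{kx^k}+\ln\bigl(1-x^{x-n-1}\bigr)$, and at $x^*$ the positive series (${}\approx W^2/2n^2$) dwarfs the doubly-exponentially small negative logarithm. The phrase about the absorbed $O(1/x^2)$ error staying strictly below $\ln\ln(ne)/n$ is what the \emph{upper} bound requires. There, your estimate $\sup_x F(x)-F(x^*)=O(1/n)$ is looser than what the data give — since $F'(x^*)=-1/(x^*)^2=-W^2/n^2$ while $F''\asymp-W^2/n$ near $x^*$, the quadratic loss is only $O(W^2/n^3)$ — so the dominant contribution to $\sup_x\ln g(x)-F(x^*)$ is the $\approx W^2/2n^2$ discrepancy term, and the quantitative claim that actually has to be verified is essentially $W(ne)^2/(2n)<\ln\ln(ne)$ for all $n>50$ (which holds, but you should state it and check it at $n=51$, together with the endpoint estimates ruling out $x\to1^+$ and $x\to(n+1)^-$). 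These are fixable details, not conceptual gaps; the core approach is sound.
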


It light of Theorem~\ref{bound:phi}, it is interesting to compare the growth of the sequence $\phi(n)$ with the growth of the sequence $n!$ of factorials. Asymptotical bounds on $n!$ proved  in \cite{factorial} yield the following lower and upper bounds on the logarithm $\ln n!$ of $n!$:
$$n\ln n-n+\frac12\ln n+\frac{\ln 2}2+\frac1{12n+1}<\ln n!<n\ln n-n+\frac{\ln n}n+\frac12\ln n+\frac{\ln 2}{2}+\frac1{12n}.$$
Comparing these two formulas, we see that the sequence $\phi(n)$ as well as $\hbar(n)$ grows faster than any exponent $a^n$, $a>1$, but slower than the sequence $n!$ of factorials.

In fact the upper bound $\varphi(n+1)\le n!$ can be easily proved by induction:

\begin{proposition} Each ideal $G$-space $\mathbf X=(X,\I)$ has $\Phi_{\mathbf X}(n)\le \hbar(n)\le\varphi(n+1)\le n!$ for every $n\ge 2$.
\end{proposition}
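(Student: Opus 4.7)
The first two inequalities in the chain are already in place: Theorem~\ref{t2.4} gives $\Phi_{\mathbf X}(n)\le\hbar(n)$, and Theorem~\ref{t2.5n} gives $\hbar(n)\le\varphi(n+1)$. Hence the entire content of the proposition is the purely numerical inequality $\varphi(n+1)\le n!$, which does not involve the ideal $G$-space $(X,\I)$ at all.

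To prove this numerical inequality I would proceed by induction on $n\ge 2$, setting
$$V(n):=\varphi(n+1)=\max_{1\le k\le n}\sum_{i=0}^{n-k}k^i.$$
The base case $n=2$ is the direct computation $V(2)=\max(1+1,\,1)=2=2!$. For the inductive step I split the maximum into the case $k=n$, where the sum equals $1$, and the case $1\le k\le n-1$, where the telescoping identity
$$\sum_{i=0}^{n-k}k^i=1+k\sum_{i=0}^{(n-1)-k}k^i\le 1+k\,V(n-1)$$
applies, the last inequality being valid because the inner sum is exactly one of the candidates entering the maximum that defines $V(n-1)$. Taking the maximum over $k\in\{1,\dots,n-1\}$ yields $V(n)\le 1+(n-1)V(n-1)$, so the inductive hypothesis $V(n-1)\le(n-1)!$ gives
$$V(n)\le 1+(n-1)(n-1)!=n!-\bigl((n-1)!-1\bigr)\le n!,$$
since $(n-1)!\ge 1$ for every $n\ge 2$.

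I do not foresee a genuine obstacle in this argument: the only point that deserves a brief verification is that the recursion $V(n)\le 1+(n-1)V(n-1)$ is valid for the \emph{full} range $1\le k\le n-1$, but this is immediate from the fact that $V(n-1)$ is defined as a maximum over precisely that range of indices. An alternative route would be the term-by-term estimate $k^i\le n!/(n-i)!$ (valid whenever $i\le n-k$, because then $n-i+1,\dots,n$ are all $>k$), which after summation gives $\sum_{i=0}^{n-k}k^i\le n!\sum_{j=k}^n 1/j!\le n!\,(e-2)<n!$ for $k\ge 2$, with the case $k=1$ handled trivially; but the inductive proof seems cleaner.
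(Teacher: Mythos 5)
Your argument is correct and follows essentially the same route as the paper: both reduce the claim to the purely numerical inequality $\varphi(n+1)\le n!$ and then run an induction on $n$ by establishing a recursion relating $\varphi(n+1)$ to $\varphi(n)$. The only difference is in how the geometric sum is decomposed. The paper splits off the \emph{leading} term and writes
$\sum_{i=0}^{n-k}k^i=\sum_{i=0}^{n-1-k}k^i+k^{n-k}\le \varphi(n)+\varphi(n)(k-1)+1\le\varphi(n)(k+1)$,
which after taking the maximum over $k$ yields $\varphi(n+1)\le n\,\varphi(n)$. You instead split off the \emph{constant} term and factor out $k$, getting $\sum_{i=0}^{n-k}k^i=1+k\sum_{i=0}^{(n-1)-k}k^i\le 1+k\,\varphi(n)$, hence $\varphi(n+1)\le 1+(n-1)\varphi(n)$. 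Your recursion is marginally tighter (it dominates $n\,\varphi(n)$ only up to the cheap fact that $\varphi(n)\ge 1$), and your handling of the endpoint $k=n$ is slightly cleaner because the sum there is literally $1$; but these are cosmetic variations on the same inductive idea. Both closes with the inductive hypothesis $\varphi(n)\le(n-1)!$ to conclude $\varphi(n+1)\le n!$, and your base case $\varphi(3)=2=2!$ agrees with the paper's.
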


\begin{proof} The inequalities $\Phi_{\mathbf X}(n)\le \hbar(n)\le\varphi(n+1)$ follow from Theorems~\ref{t2.4} and \ref{t2.5n}. The inequality $\varphi(n+1)\le n!$ will be proved by induction.
It holds for $n=2$ as $\varphi(3)=\sum_{i=0}^{3-1-1}1^i=2=2!$. Assume that for some $n\ge 1$ we have proved that $\varphi(n)\le (n-1)!$. Observe that for every $0<k<n$
$$
\sum_{i=0}^{n-k}k^i=\sum_{i=0}^{n-1-k}k^i+k^{n-k}\le \varphi(n)+\frac{k^{n-k}-1}{k-1}(k-1)+1\le \varphi(n)+\varphi(n)(k-1)+1=\varphi(n)k+1\le \varphi(n)(k+1),
$$
which implies
$$\varphi(n+1)=\max_{0<k\le n}\sum_{i=0}^{n-k}k^i=\max_{0<k<n}\sum_{i=0}^{n-k}k^i\le\max_{0<k< n}\varphi(n)(k+1)=\varphi(n)\cdot n\le (n-1)!\cdot n=n!.$$
\end{proof}

The definition of the numbers $\hbar(n)$ is algorithmic and can be calculated by computer.
However the complexity of calculation grows very quickly. So, the exact values of the numbers $\hbar(n)$ are known only $n\le 7$. For $n=8$ by long computer calculations we have merely found an upper bound on $\hbar(8)$. In particular, finding the upper bound $\hbar(8)\le 136$ required a year of continuous calculations on a laptop computer. The values of the sequences $\varphi(n)$, $1+\lfloor\phi(n)\rfloor$, $\hbar(n)$,  $\varphi(n+1)$, $n!$, $u(n)$, and $2^{2^{n-1}-1}$ for $n\le 8$ are presented in the following table:

$$
\begin{array}{|c|rrrrrrr}\hline
n  & 2 & 3 & 4 & 5 & 6 & 7 & 8 \\ \hline
\varphi(n)&1 &2 &3&7&15&40&121  \\
1+\lfloor\phi(n)\rfloor&2 &3&4&8&17&42&122  \\ \hline
\hbar(n)& 2 & 3 & 5 & 9 & 19&47&{\le}136\\ \hline
\varphi(n+1)&2 &3&7&15&40&121&364  \\ \hline
n!& 2 & 6 &24& 120 &720&4320&30240 \\ \hline
u(n) &2&6& 42& 1806&   3263442&     10650056950806&---\\
2^{2^{n-1}-1}&2&8&128&32768&2147483648&9223372036854775808&2^{127}\\ \hline
\end{array}
$$
\smallskip

This table shows that the upper bound given by Theorem~\ref{t2.4} is much better than those from Theorems~\ref{t2.1}---\ref{t2.3}. Since $\hbar(n)=n$ for $n\le 3$, Theorem~\ref{t2.4} implies a positive answer to Problem~\ref{prob3} for $n\le 3$.

\begin{corollary} For each partition $X=A_1\cup\dots\cup A_n$ of an ideal $G$-space $\mathbf X=(X,\I)$ into $n\le 3$ pieces some cell $A_i$ of the partition has $\cov(\Delta_\I(A_i))\le n$. Consequently, $\Phi_{\mathbf X}(n)\le n$ for $n\le 3$.
\end{corollary}

\section{Densities and submeasures on $G$-spaces}\label{s3}

The other approach to solution of Problems~\ref{prob1}--\ref{prob3} exploits various densities and submeasures on $G$-spaces. Partial solutions of Problems~\ref{prob1}--\ref{prob3} obtained by this method
 are surveyed in Sections~\ref{s3}--\ref{s9}. In this section we recall the necessary definitions related to densities and submeasures.

Let $X$ be a $G$-space and $\PP(X)$ be the Boolean algebra of all subsets of $X$.
A function $\mu:\PP(X)\to [0,1]$ is called
\begin{itemize}
\item {\em $G$-invariant} if $\mu(gA)=\mu(A)$ for any $A\subset X$ and $g\in G$;
\item {\em monotone} if $\mu(A)\le\mu(B)$ for any subsets $A\subset B\subset X$;
\item {\em subadditive} if $\mu(A\cup B)\le\mu(A)+\mu(B)$ for any subsets $A,B\subset X$;
\item {\em additive} if $\mu(A\cup B)=\mu(A)+\mu(B)$ for any disjoint subsets $A,B\subset X$;
\item a {\em density} if $\mu$ is monotone, $\mu(\emptyset)=0$ and $\mu(X)=1$;
\item a {\em submeasure} if $\mu$ is a subadditive density;
\item a {\em measure} if $\mu$ is an additive density.
\end{itemize}
So, all our measures are finitely additive probability measures defined on the Boolean algebra $\PP(X)$ of all subsets of $X$.

The space of all densities on $X$ will be denoted by $D(X)$ and will be considered as a closed convex subspace of the Tychonoff cube $[0,1]^{\PP(X)}$. The space $D(X)$ contains a closed convex subset $P(X)$ consisting of measures.

A measure $\mu$ on $X$ is called {\em finitely supported} if $\mu(F)=1$ for some finite subset $F\subset X$. In this case $\mu$ can be written as a convex combination $\mu=\sum_{i=1}^n\alpha_i\delta_{x_i}$ of Dirac measures. Let us recall that the {\em Dirac measure} $\delta_x$ supported at a point $x\in X$ is the $\{0,1\}$-valued measure assigning to each subset $A\subset X$ the number
$$\delta_x(A)=\begin{cases}0&\mbox{if $x\notin A$},\\
1&\mbox{if $x\in A$}.
\end{cases}
$$
The family of all finitely supported measures on $X$ will be denoted by $P_\w(X)$. It is a convex dense subset in the space $P(X)$ of all measures on $X$.

A finitely supported measure $\mu\in P_\w(X)$ is called a {\em uniformly distributed measure} if $\mu=\frac1{|F|}\sum_{x\in F}\delta_x$ for some non-empty finite subset $F\subset X$ (which coincides with the support of the measure $\mu$). The set of uniformly distributed measures will be denoted by $P_u(X)$.

For any finitely supported measures $\mu=\sum_i\alpha_i\delta_{a_i}$ and $\nu=\sum_j\beta_j\delta_{b_j}$ on a group $G$ we can define their convolution by the formula
$$\mu*\nu=\sum_{i,j}\alpha_i\beta_j\delta_{a_ib_j}.$$
More generally, the convolution $\mu*\nu$ can be well-defined for any measure $\mu\in P(G)$ on a group $G$ and a density $\nu\in D(X)$ on a $G$-space $X$:
$$\mu*\nu(A)=\int_G\nu(x^{-1}A)d\mu(x)\mbox{ for any set $A\subset X$}.$$
It can be shown that the convolution operation  $*:P(G)\times D(X)\to D(X)$ is {\em right continuous} in the sense that for every density $\nu\in D(X)$ the right shift $\rho_\nu:P(G)\to D(X)$, $\rho_\nu:\mu\mapsto \mu*\nu$, is continuous.
By a standard argument (see e.g. \cite[4.4]{HS}), it can be shown that the operation of convolution is associative in the sense that
$$(\mu_1*\mu_2)*\mu_3=\mu_1*(\mu_2*\mu_3)$$for any measures $\mu_1,\mu_2\in P(G)$ and a density $\mu_3\in D(X)$.
The right-continuity of the convolution operation implies that for every density $\nu\in D(X)$ its $P(G)$-orbit $P(G)*\nu=\{\mu*\nu:\mu\in P(G)\}$ is a closed convex set in $D(X)$, which coincides with the closed convex hull of the $G$-orbit $G\nu=\{\delta_g*\nu:g\in G\}$ of $\nu$.

A density $\nu\in D(X)$ will be called {\em minimal} if each density $\mu\in P(G)*\nu$ has $P(G)$-orbit $P(G)*\mu=P(G)*\nu$. Zorn's Lemma and the compactness of $P(G)$-orbits implies that for each density $\nu\in D(X)$ its $P(G)$-orbit $P(G)*\nu$ contains a minimal density.

Let $D_{\min}(X)$ be the set of all minimal densities on $X$ and $P_{\min}(X)=P(X)\cap D_{\min}(X)$ be the set of all minimal measures on $X$. Observe that the set $P_{\min}(X)$ is not empty and contains the set $P_G(X)$ of all $G$-invariant measures (which can be empty). A $G$-space $X$ is called {\em amenable} if $P_G(X)\ne \emptyset$, i.e., $X$ admits a $G$-invariant measure $\mu:\PP(X)\to[0,1]$.
It can be shown that a $G$-space $X$ is amenable if it satisfies the {\em F\o lner condition}:
for every $\e>0$ and every finite set $F\subset G$ there is a finite set $E\subset X$ such that $|FE|<(1+\e)|E|$. It is well-known \cite{Pat} that a group $G$ is amenable if and only if it satisfies the  F\o lner condition.

Each group $G$ will be considered as a $G$-space endowed with the left action $\alpha:G\times G\to G$, $\alpha:(g,x)\mapsto gx$, of $G$ on itself. In this case the space $P(G)$ endowed with the operation of convolution is a compact right-topological semigroup. $G$-Invariant densities or Boolean ideals on $G$ will be called {\em left-invariant}.

For a density $\mu:\PP(X)\to[0,1]$ on a set $X$ its {\em subadditivization} $\widehat\mu:\PP(X)\to[0,1]$ is defined by the formula
$$\widehat\mu(A)=\sup_{B\subset X}\big(\mu(A\cup B)-\mu(B)\big).$$
The subadditivization $\widehat \mu$ is a submeasure such that $\mu\le\widehat\mu$. A density $\mu$ is subadditive if and only if it $\mu=\widehat\mu$.

For a density $\mu:\PP(X)\to[0,1]$ on a set $X$ by $[\mu{=}0]$ we shall denote the family $\{A\subset X:\mu(A)=0\}$. The family $[\mu{=}0]$ is a non-trivial Boolean ideal on $X$ if $\mu$ is subadditive. The inequality $\mu\le\widehat\mu$ implies $[\widehat\mu{=}0]\subset[\mu{=}0]$ for any density $\mu$ on $X$.

In this paper we shall meet many examples of so-called {\em extremal densities}. Those are densities obtained by applying infima and suprema to convolutions of measures over certain families of measures on groups or $G$-spaces. The simplest examples of extremal densities are the densities
$\mathsf i_1:\PP(X)\to\{0,1\}$ and $\mathsf s_1:\PP(X)\to[0,1]$ defined on each set $X$ by
$$\mathsf i_1(A)=\inf_{\mu_1\in P_\w(X)}\mu_1(A)\mbox{ \ \ \ and \ \ \ }\mathsf s_1(A)=\sup_{\mu_1\in P_\w(X)}\mu_1(A).$$
It is clear that
$$
\mathsf i_1(A)=\begin{cases}0,&\mbox{if $A\ne X$,}\\
1&\mbox{if $A=X$,}\end{cases}
\mbox{ \ \ \ and \ \ \ }
\mathsf s_1(A)=\begin{cases}0,&\mbox{if $A=\emptyset$,}\\
1&\mbox{if $A\ne\emptyset$,}\end{cases}
$$
which implies that $\mathsf i_1$ and $\mathsf s_1$ are the smallest and largest densities on $X$, respectively. The density $\mathsf s_1$ is subadditive while $\mathsf i_1$ is not (for a set $X$ containing more than one point). More complicated extremal densities will appear in Sections~\ref{s5}--\ref{s9}.

Another important example of an extremal density is the {\em upper Banach density}
$$d^*:\PP(X)\to[0,1],\;\;d^*:A\mapsto \sup_{\mu\in P_{\min}(X)}\mu(A)$$ defined on each  $G$-space $X$.
It is clear that the upper Banach density $d^*$ is subadditive and hence is a submeasure on $X$.

\section{On partitions of $G$-spaces endowed with a $G$-invariant measure}\label{s4}

In fact, Problems~\ref{prob1}--\ref{prob3} have trivial affirmative answer for amenable $G$-spaces (cf. Theorem 12.8 \cite{PB}).

\begin{theorem}\label{t4.1} Let $(X,\I)$ be an ideal $G$-space endowed with a $G$-invariant measure $\mu:\PP(X)\to[0,1]$ such that $\I\subset[\mu{=}0]$. Each subset $A\subset X$ with $\mu(A)>0$ has $\cov(\Delta_\I(A))\le1/\mu(A).$
\end{theorem}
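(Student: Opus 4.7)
The plan is to combine a pigeonhole estimate with a maximality argument. First, I would record a useful symmetry: $\Delta_\I(A)=\Delta_\I(A)^{-1}$. This follows because $\I$ is $G$-invariant, so $x^{-1}(A\cap xA)=x^{-1}A\cap A$ lies in $\I$ if and only if $A\cap xA$ does; hence $x\in\Delta_\I(A)$ iff $x^{-1}\in\Delta_\I(A)$. I would also note that $e\in\Delta_\I(A)$, since $\mu(A)>0$ and $\I\subset[\mu{=}0]$ force $A\notin\I$.

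Next, I would show that any family $\{g_\alpha A\}_\alpha$ of left translates of $A$ which is \emph{$\I$-almost disjoint} (meaning $g_\alpha A\cap g_\beta A\in\I$ whenever $\alpha\ne\beta$) has cardinality at most $1/\mu(A)$. Indeed, by $G$-invariance $\mu(g_\alpha A)=\mu(A)$; by the assumption $\I\subset[\mu{=}0]$ all pairwise intersections have $\mu$-measure zero; and then finite additivity of $\mu$ together with $\mu(X)=1$ yields $|F|\cdot\mu(A)\le 1$ for every finite subfamily indexed by $F$. In particular the whole family is finite.

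Now I would take a (necessarily finite) maximal subset $F\subset G$ for which $\{fA:f\in F\}$ is $\I$-almost disjoint; by the previous paragraph $|F|\le 1/\mu(A)$. For any $g\in G$, if $g\in F$ then $g=g\cdot e\in F\cdot\Delta_\I(A)$ using $e\in\Delta_\I(A)$; otherwise maximality implies that the enlarged family $\{fA:f\in F\}\cup\{gA\}$ fails to be $\I$-almost disjoint, so some $f\in F$ satisfies $fA\cap gA\notin\I$. Applying $f^{-1}$ and invoking the $G$-invariance of $\I$ gives $A\cap f^{-1}gA\notin\I$, i.e., $f^{-1}g\in\Delta_\I(A)$, so $g\in f\Delta_\I(A)\subset F\cdot\Delta_\I(A)$. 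Either way $G=F\cdot\Delta_\I(A)$, yielding $\cov(\Delta_\I(A))\le|F|\le 1/\mu(A)$. There is no real obstacle here: the argument is the standard packing-implies-covering dichotomy from invariant measure theory, with the role of the null ideal played by any subideal of $[\mu{=}0]$.
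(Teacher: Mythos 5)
Your proof is correct and follows essentially the same route as the paper's: choose a maximal packing family of left translates, bound its size by $1/\mu(A)$ via finite additivity, and then use maximality to show the family covers $G$ via $\Delta_\I(A)$. The only cosmetic differences are that you take the family maximal with respect to $\I$-almost-disjointness (the paper uses $\mu$-almost-disjointness, which is implied by yours via $\I\subset[\mu{=}0]$), you explicitly treat the case $g\in F$ by invoking $e\in\Delta_\I(A)$ (the paper absorbs this by allowing $y=x$), and you record the unused symmetry $\Delta_\I(A)=\Delta_\I(A)^{-1}$.
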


\begin{proof} Choose a maximal subset $F\subset G$ such that $\mu(xA\cap yA)=0$ for any distinct points $x,y\in F$. The additivity and $G$-invariance of the measure $\mu$ implies that $|F|\le1/\mu(A)$. By the maximality of $F$, for every $x\in G$ there is $y\in F$ such that $\mu(xA\cap yA)>0$, which implies $yA\cap xA\notin\I$ and $A\cap y^{-1}xA\notin \I$. Then $y^{-1}x\in\Delta_\I(A)$ and $x\in y\cdot \Delta_\I(A)$. So, $X=F\cdot\Delta_\I(A)$ and $\cov(\Delta_\I(A))\le |F|\le 1/\mu(A)$.
\end{proof}

\begin{corollary}\label{c4.2} Let $(X,\I)$ be an ideal $G$-space admitting a $G$-invariant measure $\mu:\PP(X)\to[0,1]$ such that $\I\subset[\mu{=}0]$. For each partition $X=A_1\cup \dots \cup A_n$ of $X$ some cell $A_i$ of the partition has $\cov(\Delta_\I(A_i))\le n$. This implies  $\Phi_{(X,\I)}(n)\le n$ for all $n\in\IN$.
\end{corollary}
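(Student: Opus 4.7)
The plan is to reduce the statement to Theorem~\ref{t4.1} by a simple pigeonhole argument using the additivity of $\mu$. First I would fix an arbitrary partition $X=A_1\cup\dots\cup A_n$. Since $\mu$ is additive, disjoint, and $\mu(X)=1$, I would sum $\sum_{i=1}^n \mu(A_i)=1$ and extract an index $i\in\{1,\dots,n\}$ with $\mu(A_i)\ge \tfrac{1}{n}$. (Strictly, additivity is stated for two disjoint sets, but iterating gives finite additivity.) This is the one cell I will work with for the rest of the argument.

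Next I would invoke Theorem~\ref{t4.1} applied to this cell $A_i$. Its hypotheses are met verbatim: the ideal $\I$ is contained in $[\mu{=}0]$ by assumption on the $G$-space $(X,\I,\mu)$, and $\mu(A_i)>0$ by the previous step. Theorem~\ref{t4.1} then yields
$$\cov(\Delta_\I(A_i))\le \frac{1}{\mu(A_i)}\le n,$$
which is exactly the first conclusion.

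For the second conclusion, I would recall the definition
$$\Phi_{(X,\I)}(n)=\sup_{\C\in X/n}\min_{A\in\C}\cov(\Delta_\I(A)).$$
What we have proved is that for each cover $\C\in X/n$ (equivalently, for each partition, which may be assumed by splitting repeated sets and replacing any overlap by differences with no loss, or just by noting covers of size at most $n$ are handled by the same pigeonhole since total $\mu$-mass is at least $1$) there exists a member $A\in\C$ with $\cov(\Delta_\I(A))\le n$, so the inner minimum is $\le n$ and hence the outer supremum is $\le n$ as well. There is no real obstacle here: the whole corollary is a one-line pigeonhole consequence of Theorem~\ref{t4.1}, and the only minor care needed is to make sure that the pigeonhole step applies equally well to covers (not just partitions), which it does because a cover of size $\le n$ still forces $\sum_i\mu(A_i)\ge \mu(X)=1$ by monotonicity and subadditivity implied by additivity.
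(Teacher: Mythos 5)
Your proof is correct and follows essentially the same route as the paper: pigeonhole using (sub)additivity of the measure to find a cell $A_i$ with $\mu(A_i)\ge 1/n$, then apply Theorem~\ref{t4.1}. Your remark about covers versus partitions for the $\Phi_{(X,\I)}$ bound is a sensible clarification that the paper leaves implicit.
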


\begin{proof} The subadditivity of the measure $\mu$ guarantees that some cell $A_i$ of the partition has measure $\mu(A_i)\ge1/n$. Then $\cov(\Delta_\I(A_i))\le 1/\mu(A_i)\le n$ according to Theorem~\ref{t4.1}.
\end{proof}

The following theorem resolves Problem~\ref{prob2} for $G$-spaces with amenable acting group $G$.

\begin{theorem}\label{t4.3} For each partition $G=A_1\cup\dots\cup A_n$ of an ideal $G$-space $(X,\I)$ endowed with an action of an amenable group $G$, some cell $A_i$ of the partition has $\cov(\Delta_\I(A_i))\le n$, which implies $\Phi_{(G,\I)}(n)\le n$ for all $n\in\IN$.
\end{theorem}

\begin{proof} Using the idea of the proof of Theorem 3.1 of \cite{BL}, we shall construct a $G$-invariant measure $\mu:\PP(X)\to[0,1]$ of $X$ whose null ideal $[\mu{=}0]$ contains the  $G$-invariant ideal $\I$.

Let $[G]^{<\w}$ be the Boolean ideal of all finite subsets of the amenable group $G$. Consider the set $\mathcal D=([G]^{<\w}\setminus\{\emptyset\})\times \IN\times \I$ endowed with the partial order $(F,n,A)\le (E,m,B)$ iff $F\subset E$, $n\le m$, and $A\subset B$. To each triple $d=(F,n,A)$ assign a finitely supported measure $\mu_d$ on $X$ as follows. Using the F\o lner condition, find a finite set $E\subset [G]^{<\w}$ such that $|FE|<(1+\frac1n)|E|$. Since $\I$ is a non-trivial $G$-invariant ideal on $X$, the set $E^{-1}A\in\I$ does not coincide with $X$ and hence we can find a point $x_d\in X\setminus E^{-1}A$.
Then $Ex_d\subset X\setminus A$ and hence $\mu_d(A)=0$ for the finitely supported measure
$\mu_d=\frac1{|E|}\sum_{g\in E}\delta_{gx_d}$ on $X$.

By the compactness of the Tychonoff cube $[0,1]^{\PP(X)}$ the net $(\mu_d)_{d\in\mathcal D}$ has a limit point, which is a measure $\mu:\PP(X)\to[0,1]$ such that for every neighborhood $O(\mu)\subset [0,1]^{\PP(X)}$ and every $d_0\in \mathcal D$ there is $d\ge d_0$ in $\mathcal D$ such that $\mu_d\in O(\mu)$. Repeating the argument of the proof of Theorem 3.1 \cite{BL} it can be shown that $\mu$ is a $G$-invariant measure on $X$ such that $\mu(A)=0$ for every $A\in\mathcal I$.

So, it is legal to apply Corollary~\ref{c4.2} and conclude that for any partition $X=A_1\cup\dots\cup A_n$ of $X$ some cell $A_i$ of the partition has $\cov(\Delta_\I(A_i))\le n$.
\end{proof}

So, Problems~\ref{prob2}, \ref{prob3} remains open only for $G$-spaces with non-amenable acting group $G$.

\section{The extremal density $\is_{12}$}\label{s5}

In this section we consider the extremal density $\is_{12}$, which is defined on each $G$-space $X$ by the formula
$$\is_{12}(A)=\inf_{\mu_1\in P_\w(G)}\sup_{\mu_2\in P_\w(X)}\mu_1*\mu_2(A)=\inf_{\mu\in P_\w(G)}\sup_{x\in X}\mu*\delta_x(A)$$
for $A\subset X$. It can be shown that the density $\is_{12}$ is $G$-invariant. In case of groups this density (denoted by $a$) was introduced in \cite{Sol} and later studied in \cite{Ban}.


The density $\is_{12}(A)$ can be used to give an upper bound for the packing index of a set $A$ in $G$. For a subset $A\subset X$ of an ideal $G$-space $(X,\I\/)$ its packing index $\Ipack(A)$ is defined by $$\Ipack(A)=\sup\{|E|:E\subset G,\;\;xA\cap yA\in\I\mbox{ for any distinct points $x,y\in E$}\}.$$
If the ideal $\I=\{\emptyset\}$, the we shall write $\pack(A)$ instead of $\pack_{\{\emptyset\}}(A)$.
Packing indices were introduced and studied in \cite{BLR}, \cite{BL11}.
The packing index $\Ipack(A)$ upper bounds the covering number $\cov(\Delta_\I(A))$.

\begin{proposition}\label{p5.2n} For any subset $A$ of an ideal $G$-space $(X,\I)$ we get $\cov_\I(A)\le\Ipack(A).$
\end{proposition}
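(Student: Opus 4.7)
The plan is to copy the Zorn/maximality argument used in the proof of Theorem~\ref{t4.1}, with the measure-theoretic disjointness condition $\mu(xA\cap yA)=0$ replaced by the ideal-theoretic disjointness $xA\cap yA\in\I$ that is built directly into the definition of the packing index. Specifically, I would pick (by Zorn's lemma) a maximal subset $E\subset G$ such that $xA\cap yA\in\I$ for all distinct $x,y\in E$; by the definition of $\Ipack(A)$ this immediately yields $|E|\le\Ipack(A)$.

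The core step is then to prove $G=E\cdot\Delta_\I(A)$, from which the desired inequality $\cov(\Delta_\I(A))\le|E|\le\Ipack(A)$ follows at once. One may assume $A\notin\I$, since otherwise $\Delta_\I(A)=\emptyset$ while $\Ipack(A)$ is as large as one wishes, and the inequality holds by convention. Given $g\in G$, maximality of $E$ splits into two cases: either $g\in E$ already, in which case $e\in\Delta_\I(A)$ (because $A\cap A=A\notin\I$) and so $g=g\cdot e\in E\cdot\Delta_\I(A)$; or there exists $y\in E$ with $gA\cap yA\notin\I$, and then the $G$-invariance of the ideal $\I$ lets us rewrite this as $A\cap y^{-1}gA\notin\I$, so that $y^{-1}g\in\Delta_\I(A)$ and $g\in y\cdot\Delta_\I(A)\subset E\cdot\Delta_\I(A)$.

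I do not foresee any real obstacle; the argument is essentially a definitional transcription of the maximality step in Theorem~\ref{t4.1}. The only points that call for a bit of stylistic care are the inclusion $e\in\Delta_\I(A)$ (i.e. making the $g\in E$ case explicit) and the cosmetic bookkeeping of the trivial case $A\in\I$.
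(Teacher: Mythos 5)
Your proposal is correct and takes essentially the same route as the paper's proof: choose via Zorn's Lemma a maximal $E\subset G$ with $xA\cap yA\in\I$ for all distinct $x,y\in E$, note $|E|\le\Ipack(A)$ by definition, and use maximality together with $G$-invariance of $\I$ to show $X=E\cdot\Delta_\I(A)$, whence $\cov(\Delta_\I(A))\le|E|\le\Ipack(A)$. The extra case-splitting you flag (the $g\in E$ subcase and the degenerate $A\in\I$ case) merely makes explicit details that the paper's proof leaves tacit.
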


\begin{proof} Using Zorn's Lemma, choose a maximal subset $F\subset G$ such that $xA\cap yA\in\I$ for any distinct points $x,y\in F$. By the maximality of $F$, for any $x\in G$ there is $y\in F$ such that $yA\cap xA\notin\I$. By the $G$-invariance of the ideal $\I$, $A\cap y^{-1}xA\notin\I$ and hence $y^{-1}x\in\Delta_\I(A)$. Then $x\in y\,\Delta_\I(A)\subset F\cdot\Delta_\I(A)$ and hence
$$\cov(\Delta_\I(A))\le|F|\le\Ipack(A).$$
\end{proof}

Applications of the extremal density $\is_{12}$ to Problems~\ref{prob1}--\ref{prob3} are based on the following fact.

\begin{proposition}\label{p5.2} If a subset $A$ of a $G$-space $X$ has positive density $\is_{12}(A)>0$, then  $$\cov(\Delta_\I(A))\le\Ipack(A)\le 1/\is_{12}(A)$$
for any $G$-invariant Boolean ideal $\I\subset[\wis_{12}{=}0]$.
\end{proposition}

\begin{proof} The inequality $\cov(\Delta_\I(A))\le\Ipack(A)$ was proved in Proposition~\ref{p5.2n}. It remains to prove that $\Ipack(A)\le 1/\is_{12}(A)$.
Assuming conversely that $\Ipack(A)>1/\is_{12}(A)$, we can find a finite subset $F\subset G$ such that $|F|>1/\is_{12}(A)$ and $xA\cap yA\in\I$ for any distinct points $x,y\in F$. It follows that the set $Z=\bigcup\{xA\cap yA:x,y\in F,\;x\ne y\}$ belongs to the ideal $\I$ and so does the set $F^{-1}Z$. Consequently, $\wis_{12}(F^{-1}Z)=0$ and the set $A'=A\setminus F^{-1}Z$ has density
$\is_{12}(A')=\is_{12}(A)$ according to the definition of the submeasure $\wis_{12}$.
The definition of the set $Z$ implies that the indexed family $(xA')_{x\in F}$ is disjoint. We claim that $|F^{-1}z\cap A'|\le 1$ for every point $z\in X$. Assuming conversely that for some $z\in X$ the set $F^{-1}z$ contains two distinct points $a,b\in A'$, we conclude that $a=x^{-1}z$ and $b=y^{-1}z$ for two distinct points $x,y\in F$, which implies that $xA'\cap yA'\ni z$ is not empty. But this contradicts the disjointness of the family $(xA')_{x\in F}$.
So, $|F^{-1}z\cap A'|\le 1$ and hence for the uniformly distributed measure $\mu_1=\frac1{|F|}\sum_{g\in F}\delta_{g^{-1}}$ we get
$$
\begin{aligned}
\is_{12}(A)&=\is_{12}(A')\le\sup_{\mu_2\in P_\w(X)}\mu_1*\mu_2(A')=\sup_{x\in X}\mu_1*\delta_x(A')=\\
&=\sup_{x\in X}\frac1{|F|}\sum_{g\in F}\delta_{g^{-1}}*\delta_z(A')=\sup_{x\in X}\frac{|F^{-1}z\cap A'|}{|F|}\le\frac1{|F|}<\is_{12}(A),
\end{aligned}
$$
which is a desired contradiction proving that $\Ipack(A)\le1/\is_{12}(A)$.
\end{proof}

\begin{corollary} If for a $G$-space $X$ the extremal density $\is_{12}$ is subadditive, then  any partition $X=A_1\cup\dots\cup A_n$ of $X$ some cell $A_i$ of the partition has $\cov(\Delta_\I(A))\le n$ for any left-invariant Boolean ideal $\I\subset [\is_{12}{=}0]$.
\end{corollary}

\begin{proof} The subadditivity of the density $\is_{12}$ on implies that $\wis_{12}=\is_{12}$ and $\I\subset[\is_{12}{=}0]=[\wis_{12}{=}0]$. Also the subadditivity of $\is_{12}$ guarantees that some cell $A_i$ of the partition has density $\is_{12}(A_i)\ge1/n$. Then $\cov(\Delta_\I(A_i))\le 1/\is_{12}(A_i)\le n$ according to Proposition~\ref{p5.2}.
\end{proof}

The following fact was proved in Theorem 3.9 of \cite{Ban2}.

\begin{proposition}\label{p5.1} For any $G$-space $X$ with amenable acting group $G$ the extremal density $\is_{12}$ coincides with the upper Banach density $d^*$ and hence is subadditive.
\end{proposition}

\section{The extremal density $\us_{12}$}

In this section we consider a uniform variation of the extremal density $\is_{12}$, denoted by $\us_{12}$. On each $G$-space $X$ the extremal density $\us_{12}:\PP(X)\to[0,1]$ is defined by
$$\us_{12}(A)=\inf_{\mu_1\in P_u(G)}\sup_{\mu_2\in P_\w(X)}\mu_1*\mu_2(A)\mbox{ \ \ for $A\subset X$}.
$$Here $P_u(G)$ stands for the set of all uniformly distributed measures on $G$.
It can be shown that on a group $G$ the density $\us_{12}$ can be equivalently defined as
$$\us_{12}(A)=\inf_{\emptyset{\ne}F\in[G]^{<\w}}\sup_{x\in X}\frac{|Fx\cap A|}{|F|}.$$
where $[G]^{<\w}$ is the Boolean ideal consisting of all finite subsets of $G$.
On groups the density $\us_{12}$ (denoted by $u$) was introduced by Solecki in \cite{Sol} and studied in more details in \cite{Sol} and \cite{Ban}.

It can be shown that the density $\us_{12}$ is $G$-invariant on each $G$-space $X$ and $\is_{12}\le\us_{12}$. Moreover a subset $A\subset X$ has $\is_{12}(A)=1$ if and only if $\us_{12}(A)=1$ if and only if $A$ is {\em thick} in $X$ in the sense that for every finite subset $F\subset G$ there is a point $x\in X$ with $Fx\subset A$.

On amenable groups the densities $\us_{12}$ and $\is_{12}$ coincide. This was shown by Solecki in \cite{Sol}:

\begin{proposition}[Solecki] For any amenable group $G$ the densities $\us_{12}$ and $\is_{12}$ coincide and are subadditive. If a group $G$ contains a non-Abelian free subgroup, then for every $\e>0$ there is a set $A\subset G$ with $\is_{12}(A)<\e$ and $\us_{12}(A)>1-\e$.
\end{proposition}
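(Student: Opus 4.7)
The proposition has two independent parts that I would treat separately.

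For the \emph{amenable case}, the inequality $\is_{12}\le\us_{12}$ is immediate from $P_u(G)\subseteq P_\w(G)$, and by Proposition~\ref{p5.1} the density $\is_{12}$ coincides with the subadditive Banach density $d^*$ on any amenable $G$; the nontrivial direction is $\us_{12}\le d^*$. The plan is a Følner-approximation trick. Given $\mu=\sum_{i=1}^n\alpha_i\delta_{y_i}\in P_\w(G)$ witnessing $\sup_y\mu(Ay^{-1})\le\is_{12}(A)+\e$, I approximate the weights by rationals $\alpha_i\approx k_i/K$ and select mutually disjoint left Følner sets $L_1,\dots,L_n\subseteq G$ of sizes $|L_i|=k_iL$ (for a common large $L$), arranged so that the right-translates $y_1L_1,\dots,y_nL_n$ are also pairwise disjoint; this is possible on any infinite amenable group. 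The uniform measure $\nu$ on $\bigsqcup_iy_iL_i$ then belongs to $P_u(G)$, and a short calculation yields
\[
\nu(Ay^{-1})=\sum_{i=1}^n\frac{k_i}{K}\cdot\frac{|L_i\cap y_i^{-1}Ay^{-1}|}{|L_i|}.
\]
The classical Følner estimate $|L_i\cap B|/|L_i|\le d^*(B)+o(1)$ as $L\to\infty$, applied with $B=y_i^{-1}Ay^{-1}$ and combined with the both-sided invariance $d^*(y_i^{-1}Ay^{-1})=d^*(A)$ valid on amenable groups, gives $\sup_y\nu(Ay^{-1})\le d^*(A)+\e$, whence $\us_{12}(A)\le d^*(A)=\is_{12}(A)$. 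Subadditivity of $\us_{12}$ then transfers from that of $d^*$.

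For the \emph{free-subgroup case}, I would follow Solecki's original argument. Embed $F_2=\langle a,b\rangle\le G$. The archetypal small-$\is_{12}$ computation is $\is_{12}^{F_2}(A_a)=0$ for the cell $A_a$ of reduced words beginning with $a$: for $\mu_N=\tfrac1N\sum_{k=1}^N\delta_{b^{-k}}$ and $y^{-1}=b^ju$ in reduced normal form (with $u$ not beginning with $b^{\pm1}$), the product $b^{-k}y^{-1}=b^{j-k}u$ begins with $a$ only when $k=j$ and $u$ begins with $a$, so $\sup_y\mu_N(A_ay)\le 1/N$. A single axis cell is insufficient, because the same measure also forces $\us_{12}^{F_2}(A_a)=0$; the construction must therefore be an amplification inside a free subgroup $F_n=\langle a_1,\dots,a_n\rangle\le F_2$ of rank $n\gtrsim 1/\e$, producing a set $A\subseteq F_n$ whose complement $A^c$ is simultaneously axis-thin (giving $\is_{12}(A)<\e$ by a witness measure on the ``missing'' generator $a_n$) and whose near-thickness can be verified by a ping-pong translation argument showing that for every finite $F\subseteq F_n$ some $y$ makes $|Fy\cap A|>(1-\e)|F|$. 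The result is then transferred to $G$ using a right transversal $T$ of $F_n$ in $G$, setting $\widetilde A=\bigsqcup_{t\in T}At$: measures supported inside the trivial coset witness $\is_{12}^G(\widetilde A)\le\is_{12}^{F_n}(A)$, and uniform measures supported inside a single coset yield the lower bound on $\us_{12}^G(\widetilde A)$.

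The \emph{main obstacle} is the actual construction of $A$ in the free subgroup. The tension is sharp: all ``naive'' paradoxical cells (single $A_{a_i^{\pm}}$, their finite unions, their complements) have \emph{both} $\is_{12}$ and $\us_{12}$ equal to $0$ (or both equal to $1$ by thickness), by symmetric axis-measure arguments, so any construction with $\is_{12}(A)<\e$ and $\us_{12}(A)>1-\e$ must break the axis symmetry---this is precisely what Solecki's high-rank amplification and intricate cancellation bookkeeping accomplish. A secondary delicate point in the amenable case is the \emph{uniformity} of the Følner estimate $|L_i\cap B|/|L_i|\le d^*(B)+o(1)$ across all right-translates $Ay^{-1}$ of $A$; this is exactly where the both-sided invariance of $d^*$---rather than the mere left-invariance built into its definition---becomes essential.
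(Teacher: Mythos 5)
The paper gives no proof of this proposition at all: it is stated as a cited result of Solecki from \cite{Sol}, and the rest of the section simply uses it. So there is no ``paper proof'' to compare against; the question is only whether your outline would stand on its own. It would not, for two reasons.

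\textbf{Amenable case: the uniformity gap is real and your proposed fix does not close it.} You acknowledge that the bound you need is $\sup_{y\in G}|L\cap Ay^{-1}|/|L|\le d^*(A)+\e$, i.e.\ uniform over all right-translates, and you suggest the both-sided invariance of $d^*$ resolves this. It does not. Both-sided invariance only equalizes the targets $d^*(y_i^{-1}Ay^{-1})=d^*(A)$; it says nothing about making the $o(1)$ error term $|L\cap B|/|L| - d^*(B)$ small uniformly in $B$ as $B$ ranges over all translates. For a fixed Følner set $L$, that error can be close to $1-d^*(A)$ for some translate. (Also note that the witness $\mu$ you start from and the partition $L=\bigsqcup L_i$ ultimately play no role in your estimate---the final bound is just $\sum\tfrac{k_i}{K}\,d^*(A)=d^*(A)$, so the construction reduces to picking a single Følner set, which exposes the gap clearly.) The standard way to close it is a contradiction argument that you have inverted: suppose $\us_{12}(A)>d^*(A)+\e$, pick a Følner sequence $(F_n)$ and for each $n$ a \emph{bad} point $y_n$ with $|F_ny_n\cap A|/|F_n|>d^*(A)+\e$, observe that $(F_ny_n)$ is still a left Følner sequence (right translates preserve the Følner condition), and take a weak-$*$ limit of the uniform probability measures on $F_ny_n$: this yields a left-invariant mean $m$ with $m(A)\ge d^*(A)+\e$, contradicting $m(A)\le d^*(A)$. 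This is a genuinely different maneuver than trying to make one Følner set good against all shifts at once.

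\textbf{Free-subgroup case: no construction is actually given.} You correctly identify that naive axis cells and their finite unions have both densities equal to $0$ (or both equal to $1$), so some symmetry-breaking is needed, and you gesture at ``high-rank amplification and intricate cancellation bookkeeping.'' But that is an appeal to the existence of Solecki's argument, not a reconstruction of it: no explicit set $A\subset F_n$ is produced, the claimed $\is_{12}$-witness measure on the ``missing'' generator is not verified to work against the amplified set, and the ``ping-pong translation argument'' for near-thickness is not specified. As the free-subgroup half is the substantive content of the proposition (the amenable half reduces to Proposition~\ref{p5.1} plus the Følner lemma above), leaving it at this level is a genuine gap rather than a legitimate abbreviation.
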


In general, the density $\us_{12}$ is not subadditive (as well as the density $\is_{12}$):

\begin{example}\label{ex5.1} The free group with two generators can be written as the union $F_2=A\cup B$ of two sets with $\us_{12}(A)=\us_{12}(B)=0$.
\end{example}

\begin{proof} Let $a,b$ be the generators of the free group $F_2$. The elements of the group $F_2$ can be identified with irreducible words in the alphabet $\{a,b,a^{-1},b^{-1}\}$. Let $A$ be the set of irreducible words that start with $a$ or $a^{-1}$ and $B=F_2\setminus A$. It can be shown that $F_2=A\cup B$ is a required partition with $\us_{12}(A)=\us_{12}(B)=0$. For details, see Example 3.2 in \cite{Ban}.
\end{proof}

The extremal density $\us_{12}$ can be adjusted to a subadditive density $\wus_{12}:\PP(X)\to[0,1]$ defined by
$\wus_{12}(A)=\sup_{B\subset X}(\us_{12}(A\cup B)-\us_{12}(B))$ for $A\subset X$.

For our purposes, the density $\us_{12}$ will be helpful because of the following its property, which is a bit stronger than Proposition~\ref{p5.2} and can be proved by analogy:

\begin{proposition}\label{p6.3} If a subset $A$ of a $G$-space $G$ has positive density $\us_{12}(A)>0$, then $$\cov(\Delta_\I(A))\le \Ipack(A)\le 1/\us_{12}(A)$$ for any $G$-invariant Boolean ideal $\I\subset [\wus_{12}{=}0]$.
\end{proposition}

\section{The extremal submeasure $\sis_{123}$}\label{s9}

In this section we shall present applications of the $G$-invariant submeasure $\sis_{123}$ defined on each $G$-space $X$ by the formula
$$\sis_{123}(A)=\sup_{\mu_1\in P_\w(G)}\inf_{\mu_2\in P_\w(G)}\sup_{\mu_3\in P_\w(X)}\mu_1*\mu_2*\mu_3(A)=\sup_{\mu\in P_\w(G)}\inf_{\nu\in P_\w(G)}\sup_{x\in X}\mu*\nu*\delta_x(A)\mbox{ \ \ for \ \ $A\subset X$}.$$

\begin{proposition}\label{p7.1} On each $G$-space $X$ the density $\sis_{123}:\PP(X)\to[0,1]$ is subadditive.
\end{proposition}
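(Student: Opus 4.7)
The plan is to mimic the proof already given for subadditivity of $\iss_{213}$, but with careful attention to the different pattern of quantifiers: here the outer operation is $\sup$ rather than $\inf$, and the inner block is $\inf\sup$ rather than $\sup\sup$. So, fixing subsets $A,B\subset G$ and $\e>0$, the goal is to show $\sis_{123}(A\cup B)\le \sis_{123}(A)+\sis_{123}(B)+2\e$. Since the outer operation is a supremum, I will fix an arbitrary $\mu_1\in P_\w(G)$ and prove that $\inf_{\mu_2\in P_\w(G)}\sup_{\mu_3\in P_\w(G)}\mu_1*\mu_2*\mu_3(A\cup B)\le \sis_{123}(A)+\sis_{123}(B)+2\e$; taking the supremum over $\mu_1$ then yields the claim.

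The main idea is a two-step choice of $\mu_2$ via convolution. First, using $\sis_{123}(A)\ge\inf_{\mu_2}\sup_{\mu_3}\mu_1*\mu_2*\mu_3(A)$, pick $\nu_A\in P_\w(G)$ with $\sup_{\mu_3}\mu_1*\nu_A*\mu_3(A)<\sis_{123}(A)+\e$. The key trick is to then apply the definition of $\sis_{123}(B)$ not to $\mu_1$ but to the shifted measure $\mu_1*\nu_A\in P_\w(G)$: since $\sis_{123}(B)\ge \inf_{\mu_2}\sup_{\mu_3}(\mu_1*\nu_A)*\mu_2*\mu_3(B)$, we can pick $\nu_B\in P_\w(G)$ with $\sup_{\mu_3}\mu_1*\nu_A*\nu_B*\mu_3(B)<\sis_{123}(B)+\e$. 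Now set $\nu=\nu_A*\nu_B\in P_\w(G)$, a candidate witness for the infimum over $\mu_2$.

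For any $\mu_3\in P_\w(G)$, the subadditivity of each individual probability measure gives
$$\mu_1*\nu*\mu_3(A\cup B)\le \mu_1*\nu_A*(\nu_B*\mu_3)(A)+\mu_1*\nu_A*\nu_B*\mu_3(B).$$
Because $\nu_B*\mu_3\in P_\w(G)$, the first summand is bounded by $\sup_{\mu_3'\in P_\w(G)}\mu_1*\nu_A*\mu_3'(A)<\sis_{123}(A)+\e$, and the second summand is at most $\sis_{123}(B)+\e$ by the choice of $\nu_B$. Taking the supremum over $\mu_3$ of the left-hand side preserves this bound, so $\inf_{\mu_2}\sup_{\mu_3}\mu_1*\mu_2*\mu_3(A\cup B)\le \sis_{123}(A)+\sis_{123}(B)+2\e$, as required.

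The only subtle point is the quantifier juggling in the first step: one must exploit the fact that absorbing $\nu_B$ into the ``trailing'' position (via $\nu_B*\mu_3$, which is still a finitely supported measure) lets the bound chosen for $A$ with respect to $\mu_1$ be re-used even after the convolution has been lengthened. The rest is routine manipulation of convolutions and sups.
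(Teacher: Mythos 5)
Your proof is correct and follows essentially the same route as the paper: fix $\mu_1$ (since the outer operation is a supremum), choose $\nu_A$ witnessing the infimum for $A$ relative to $\mu_1$, then choose $\nu_B$ witnessing the infimum for $B$ relative to $\mu_1*\nu_A$, and take $\mu_2=\nu_A*\nu_B$ as the witness for $A\cup B$, absorbing $\nu_B$ into the trailing $\mu_3$ when bounding the $A$-term. The paper's $\nu_2$ and $\eta_2$ are your $\nu_A$ and $\nu_B$.
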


\begin{proof} It suffices to check that $\sis_{123}(A\cup B)\le\sis_{123}(A)+\sis_{123}(B)+2\e$ for every subsets $A,B\subset X$ and real number $\e>0$. This will follow as soon as for any measure $\mu_1\in P_\w(G)$ we find a measure $\mu_2\in P_\w(G)$ such that $\sup_{\mu_3\in P_\w(X)}\mu_1*\mu_2*\mu_3(A\cup B))<\sis_{123}(A)+\sis_{123}(B)+2\e$.

By the definition of $\sis_{123}(A)$, for the measure $\mu_1$ there is a measure $\nu_2\in P_\w(G)$ such that
$$\sup_{\nu_3\in P_\w(X)}\mu_1*\nu_2*\mu_3(A)<\sis_{123}(A)+\e.$$ By the definition of $\sis_{123}(B)$ for the measure $\eta_1=\mu_1*\nu_2$ there is a measure $\eta_2\in P_\w(G)$ such that $$\sup_{\eta_3\in P_\w(X)}\eta_1*\eta_2*\eta_3(B)<\sis_{123}(B)+\e.$$ We claim that the measure $\mu_2=\nu_2*\eta_2$ has the required property. Indeed, for every measure $\mu_3\in P_\w(X)$ we get
$$\mu_1*\mu_2*\mu_3(A\cup B)\le \mu_1*\nu_2*(\eta_2*\mu_3)(A)+(\mu_1*\nu_2)*\eta_2*\mu_3(B)<\sis_{123}(A)+\e+\sis_{123}(B)+\e.$$
\end{proof}

The submeasure $\sis_{123}$ yields an upper bound on the extremal density $\is_{12}$.
The following fact was proved in \cite{Ban2}.

\begin{proposition}\label{p10.1} For any $G$-space we get $\is_{12}\le\wis_{12}\le \sis_{123}\le d^*$. Moreover, if the acting group $G$ is amenable, then
 $\us_{12}=\is_{12}=\wis_{12}=\sis_{123}=d^*$.
\end{proposition}

\begin{proof} For convenience of the reader we present a proof of the inequality $\wis_{12}\le\sis_{123}$. It suffices to check that
$$\is_{12}(A\cup B)<\is_{12}(A)+\sis_{123}(B)+2\e$$for every subsets $A,B\subset X$ and every $\e>0$.
By the definition of $\is_{12}(A)$, there is a measure $\mu_1\in P_\w(G)$ such that $\sup_{\mu_2\in P_\w(X)}\mu_1*\mu_2(A)<\is_{12}(A)+\e$.
By the definition of $\sis_{123}(B)$, for the measure $\mu_1$ there is a measure $\mu_2\in P_\w(G)$ such that $\sup_{\mu_3\in P_\w(X)}\mu_1*\mu_2*\mu_3(B)\le \sis_{123}(B)+2$.  Then for the measure $\nu_1=\mu_1*\mu_2\in P_\w(G)$ we get
$$\is_{12}(A\cup B)\le\sup_{\nu_2\in P_\w(X)}\nu_1*\nu_2(A\cup B)\le \sup_{\nu_2\in P_\w(X)}(\mu_1*\mu_2*\nu_2(A)+\mu_1*\mu_2*\nu_2(B))<\is_{12}(A)+\sis_{123}(B)+2\e.$$
\end{proof}

\begin{proposition}\label{p10.3} For any $G$-space $X$ with finite acting group $G$ we get
$$\is_{12}(A)=\sis_{123}(A)=d^*(A)=\sup_{x\in X}\frac{|A\cap Gx|}{|Gx|}$$
for every set $A\subset X$.
\end{proposition}

\begin{proof} Denote by $\lambda=\frac1{|G|}\sum_{x\in G}\delta_x$ the Haar measure on the group $G$ and observe that for every $A\subset X$ we get
$$\sis_{123}(A)\le\sup_{\mu_1\in P_\w(G)}\sup_{x\in X}\mu_1*\lambda*\delta_x(A)=\sup_{x\in X}\lambda*\delta_x(A)=\sup_{x\in X}\frac{|A\cap Gx|}{|Gx|}.$$
On the other hand,
$$\is_{12}(A)=\inf_{\mu_1\in P_\w(X)}\sup_{\mu_2\in P_\w(X)}\mu_1*\mu_2(A)\ge\inf_{\mu_1\in P_\w(G)}\sup_{x\in X}\mu_1*\lambda*\delta_x=\sup_{x\in X}\lambda*\delta_x(A)=\sup_{x\in X}\frac{|A\cap Gx|}{|Gx|}.$$
\end{proof}

A subset $A$ of a group $G$ is called {\em conjugacy-invariant} if $xAx^{-1}=A$ for every $x\in G$.

\begin{proposition}\label{p7.5} Each conjugacy-invariant subset $A$ of a group $G$ has density $\is_{12}(A)=\sis_{123}(A)$.
\end{proposition}

\begin{proof} The inequality $\is_{12}(A)\le\sis_{123}(A)$ follows from Proposition~\ref{p10.1}.
To prove that $\sis_{123}(A)\le \is_{12}(A)$, fix any $\e>0$ and find a measure $\nu\in P_\w(G)$ such that $\sup_{\eta\in P_\w(G)}\nu*\eta(A)<\is_{12}(A)+\e$. Given any measure $\mu_1=\sum_{i}\alpha_i\delta_{a_i}\in P_\w(G)$ put $\mu_2=\nu$ and observe that for every $\mu_3\in P_\w(G)$ we get
$$
\begin{aligned}
\mu_1*\mu_2*\mu_3(A)&=\sum_i\alpha_i\, \delta_{a_i}*\nu*\mu_3(A)=\sum_i\alpha_i\, \nu*\mu_3(a_i^{-1}A)=\sum_i\alpha_i\,  \nu*\mu_3(Aa_i^{-1})=\\
&=\sum_i\alpha_i\,  \nu*\mu_3*\delta_{a_i}(A)\le \sum_i\alpha_i\sup_{\eta\in P_\w(G)}\nu*\eta(A)<\sum_i\alpha_i(\is_{12}(A)+\e)=\is_{12}(A)+\e.
\end{aligned}
$$
This implies that $\sis_{123}(A)\le\is_{12}(A)+\e$ for every $\e>0$ and hence $\sis_{123}(A)\le\is_{12}(A)$.
\end{proof}

For partitions of groups into conjugacy-invariant sets Propositions~\ref{p7.1}, \ref{p7.5} and \ref{p5.2} imply the following partial answer to Problem~\ref{prob1}.

\begin{corollary} For any partition $G=A_1\cup\dots\cup A_n$ of a group $G$ into conjugacy-invariant sets  some cell $A_i$ of the partition has $\cov(\Delta_\I(A_i))\le n$ for any left-invariant Boolean ideal $\I\subset[\wis_{12}{=}0]$.
\end{corollary}

Applications of the submeasure $\sis_{123}$ will be based on the following theorem.

\begin{theorem}\label{t9.2} If a subset $A$ of a $G$-space $X$ has positive submeasure
$\sis_{123}(A)>0$, then for some finite set $E\subset G$ the set $\Delta_\I(A)^{\wr E}=\bigcup_{x\in E}x^{-1}\Delta_\I(A)x$ has $$\cov(\Delta_\I(A)^{\wr E})\le 1/\sis_{123}(A)$$for any $G$-invariant ideal $\I\subset[\sis_{123}{=}0]$.
\end{theorem}

\begin{proof} Fix $\e>0$ so small that each integer number $n\le\frac1{\sis_{123}(A)-2\e}$ does not exceed $\frac1{\sis_{123}(A)}$. By the definition of the submeasure $\sis_{123}(A)$, there is a measure $\mu_1\in P_\w(G)$ such that $$\inf_{\mu_2\in P_\w(G)}\sup_{\mu_3\in P_\w(X)}\mu_1*\mu_2*\mu_3(A)>\sis_{123}(A)-\e.$$ Write $\mu_1$ as a convex combination $\mu_1=\sum_{i=1}^n\alpha_i\delta_{a_i}$ and put $E=\{a_1,\dots,a_n\}$.

Using Zorn's Lemma, choose a maximal subset $M\subset G$ such that for every $a\in E$ and distinct $x,y\in M$ we get $xa^{-1}A\cap ya^{-1}A\in\I$. By the maximality of $M$, for every point $g\in G$ there are points $x\in M$ and $a\in E$ such that $ga^{-1}A\cap xa^{-1}A\notin\I$ and hence $ax^{-1}ga^{-1}\in\Delta_\I(A)$ and $g\in xa^{-1}\Delta_\I(A)a\subset M\cdot\Delta_\I(A)^{\wr E}$.
So, $G=M\cdot\Delta_\I(A)^{\wr E}$ and hence $\cov(\Delta_\I(A)^{\wr E})\le |M|$. To complete the proof, it remains to check that the set $M$ has cardinality $|M|\le 1/(\sis_{123}(A)-2\e)$.

Assuming the opposite, we could find a finite subset $F\subset M$ of cardinality $|F|>1/(\sis_{123}(A)-2\e)$. The choice of the set $M\supset F$ guarantees that the set
$$B=\bigcup_{i=1}^n\{xa_i^{-1}A\cap ya^{-1}A:x,y\in F,\;x\ne y\}$$ belongs to the ideal $\I$ and hence $B\in\I\subset[\iss_{123}{=}0]$. Put $A'=A\setminus B$ and observe that for every $a\in E$ the indexed family $(ga^{-1}A')_{g\in F}$ is disjoint.
Consider the uniformly distributed measure $\mu_F=\frac1{|F|}\sum_{g\in F}\delta_{g^{-1}}$ on $G$.
Since $\sis_{123}(B)=0$, for the measure $\nu_1=\mu_1*\mu_F\in P_\w(G)$
there is a measure $\nu_2=\sum_j\beta_j\delta_{b_j}\in P_\w(G)$ such that $\sup_{\nu_3(X)}\nu_1*\nu_2*\nu_3(B)<\e$.

By the choice of the measure $\mu_1$ for the measure $\mu_2=\mu_F*\nu_2\in P_\w(G)$
there is a measure $\mu_3\in P_\w(X)$ such that $\mu_1*\mu_2*\mu_3(A)>\sis_{123}(A)-\e$. The measure $\mu_3$ can be assumed to be a Dirac measure $\mu_3=\delta_{x}$ at some point $x\in X$. Then $\mu_1*\mu_2*\delta_{x}(A)>\sis_{123}(A)-\e>1/|F|+\e$.

On the other hand, for every $i,j$ the disjointness of the families $(ga_i^{-1}A')_{g\in F}$ and  $(b_j^{-1}ga_i^{-1}A')_{g\in F}$ implies that $\sum_{g\in F}\delta_x(b_j^{-1}ga_{i}^{-1}A')\le 1$ and then
$$
\begin{aligned}
\mu_1*\mu_2*\delta_x(A')&=\mu_1*\mu_F*\nu_2*\delta_x(A')=\\
&=\sum_{i,j}\alpha_i\beta_j\sum_{g\in F}\frac1{|F|}\delta_{a_ig^{-1}b_jx}(A')
=\frac1{|F|}\sum_{i,j}\alpha_i\beta_j\sum_{g\in F}\delta_x(b_j^{-1}ga_i^{-1}A')\le\frac1{|F|}.
\end{aligned}
$$
Then
$$
\begin{aligned}
\sis_{123}(A)-\e&<\mu_1*\mu_2*\mu_3(A)\le\mu_1*\mu_2*\mu_3(A')+\mu_1*\mu_2*\mu_3(B)=\\
&<\mu_1*\mu_2*\delta_x(A')+\mu_1*\mu_F*\nu_2*\delta_x(B)<
\frac1{|F|}+\e<\sis_{123}(A)-\e
\end{aligned}$$
which is a desired contradiction.
 \end{proof}

The subadditivity of the density $\sis_{123}$ and Theorem~\ref{t9.2} imply the following corollary.

\begin{corollary}\label{c9.3} For any partition $X=A_1\cup\dots\cup A_n$ of an ideal $G$-space $(X,\I)$ with $\I\subset[\sis_{123}{=}0]$ some cell $A_i$ of the partition has $\cov\big(\Delta_\I(A)^{\conj[E]}\big)\le n$ for some finite set $E\subset G$.
\end{corollary}

Combining Theorem~\ref{t9.2} with Theorem~\ref{t2.1}, we get:

\begin{corollary}\label{c1new} If a subset $A$ of a $G$-space $X$ has positive submeasure
$\sis_{123}(A)>0$, then $$\cov(\Delta_\I(A)\cdot\Delta_\I(A))<\infty$$ for any $G$-invariant ideal $\I\subset [\sis_{123}{=}0]$ on $X$.
\end{corollary}

\begin{proof} By Theorem~\ref{t9.2}, there is a finite set $E\subset G$ such that $G=\bigcup_{x,y\in E}x\Delta_\I(A)y$. By Theorem~\ref{t2.1}, there are points $x,y\in E$ such that the set $x\Delta_\I(A)y\cdot(x\Delta_\I(A)y)^{-1}=x\Delta_\I(A)\cdot \Delta_\I(A)x^{-1}$ has finite covering number in $G$. Then the set $\Delta_\I(A)\cdot \Delta_\I(A)$ has finite covering number too.
\end{proof}

\section{Applications of the minimal and idempotent measures}\label{s8}

In this section we survey partial answers to Problem~\ref{prob2} obtained by Banakh and Fr\c aczyk \cite{BF} with help of minimal measures on $G$-spaces and quasi-invariant idempotent measures on groups. For any measure $\mu\in P(X)$ on a $G$-space $X$ let $\bar\mu:\PP(X)\to[0,1]$ be the submeasure on $X$ defined by $\bar\mu(A)=\sup_{x\in G}\mu(xA)$.

\begin{theorem}\label{t8.1} Let $(X,\I)$ be an ideal $G$-space and $\mu\in P_\I(X)$ be a minimal measure on $X$. If some subset $A\subset X$ has $\bar\mu(A)>0$, then the $\I$-difference set $\Delta_\I(A)$ has
\begin{enumerate}
\item $\cov\big(\Delta_\I(A)\cdot\Delta_\I(A)\big)\le1/\bar\mu(A)$;
\item $\covJ(\Delta_\I(A))\le1/\bar\mu(A)$ for some $G$-invariant ideal $\mathcal J\subset\{B\in\PP(G):\is_{12}(B^{-1})=0\}$ on $G$ with $\Delta_\I(A)\notin\mathcal J$.
\end{enumerate}
\end{theorem}

This theorem implies the following three results:

\begin{corollary}\label{c8.2}If a subset $A\subset X$ of a $G$-space $X$ has upper Banach density $d^*(A)>0$, then  $$\cov\big(\Delta_\I(A)\cdot\Delta_\I(A)\big)\le\frac1{d^*(A)}$$ for any $G$-invariant Boolean ideal $\I\subset[d^*=0]$ on $X$.
\end{corollary}

We recall that $d^*=\sup_{\mu\in P_{\min}(X)}\mu$.

\begin{corollary}\label{c8.3} For any partition $X=A_1\cup\dots\cup A_n$ of an ideal $G$-space $(X,\I)$ either
\begin{itemize}
\item $\cov(\Delta_\I(A_i))\le n=1/d^*(A)$ for all cells $A_i$ or else
\item some cell $A_i$ of the partition has $\cov(\Delta_\I(A_i)\cdot\Delta_\I(A_i))<n$ and $\covJ(\Delta_\I(A_i))<n$ for  some $G$-invariant ideal $\mathcal J\subset \{B\in\PP(G):\is_{12}(B^{-1})=0\}$ with $\Delta_\I(A_i)\not\in\mathcal J$.
\end{itemize}
\end{corollary}

\begin{corollary} For any partition $X=A_1\cup\dots\cup A_n$ of an ideal $G$-space $(X,\I)$ either $\cov(\Delta_\I(A_i))\le n$ for all cells $A_i$ or else  $\cov(\Delta_\I(A_i)\cdot\Delta_\I(A_i))<n$ for some cell $A_i$.
\end{corollary}

For partitions of groups we can prove a more precise result using quasi-invariant idempotent measures.
A measure $\mu\in P(G)$ on a group $G$ will be called
\begin{itemize}
\item {\em idempotent} if $\mu*\mu$;
\item {\em left quasi-invariant} (resp. {\em right quasi-invariant}) if there is a function $f:G\to[1,\infty)$ such that $f(x)\mu(xA)\le \mu(A)$ (resp. $f(x)\mu(Ax)\le\mu(A)$~) for any $A\subset G$ and $x\in G$;
\item {\em quasi-invariant} if there $\mu$ is left and right quasi-invariant.
\end{itemize}
A Boolean ideal $\I\subset \PP(G)$ on a group $G$ is called {\em invariant} if for every set $A\in\I$ and points $x,y\in G$ the shift $xAy\in\I$. The existence of quasi-invariant idempotent measures was established in \cite{BF}:

\begin{proposition} For any invariant ideal $\I$ on a countable group $G$ there is a quasi-invariant idempotent minimal measure $\mu\in P(G)$ such that $\mu(A)=0$ for all $A\in\I$.
\end{proposition}

Using quasi-invariant idempotent measures Banakh and Fr\c aczyk \cite{BF} proved the following result.

\begin{theorem}\label{t4} Let $\I$ be a $G$-invariant ideal on a group $G$ and $\mu\in P_\I(G)$ be a right quasi-invariant idempotent measure on $G$. If a subset $A\subset G$ has $\bar\mu(A)>0$, then its $\I$-difference set $\Delta_\I(A)$ has
\begin{enumerate}
\item $\cov(\Delta_\I(A)\cdot A)\le1/\bar\mu(A)$ and
\item $\covJ(\Delta_\I(A))\le1/\bar\mu(A)$ for some $G$-invariant Boolean ideal $\mathcal J\not\ni A^{-1}$ on $G$.
\end{enumerate}
\end{theorem}

This theorem implies the following partial answer to Problem~\ref{prob2}.

\begin{theorem}\label{t2} Let $G$ be a group and $\I$ be an invariant Boolean ideal on $G$ which does not contain some countable subset of $G$. For any partition $G=A_1\cup\dots\cup A_n$ of $G$ either
\begin{itemize}
\item $\cov(\Delta_\I(A_i))\le n$ for all cells $A_i$ or else
\item some cell $A_i$ of the partition has $\cov(\Delta_\I(A_i)\cdot A_i)<n$ and $\covJ(\Delta_\I(A_i))<n$ for some $G$-invariant ideal $\mathcal J\not\ni A_i^{-1}$ on $G$.
\end{itemize}
\end{theorem}

\begin{corollary}\label{c8.8} For any partition $G=A_1\cup\dots\cup A_n$ of a group $G$ either $\cov(A_iA_i)\le n$ for all cells $A_i$ or else $\cov(A_iA_i^{-1}\kern-1pt A_i)<n$ for some cell $A_i$ of the partition.
\end{corollary}

Taking into account that the ideal $\mathcal J$ appearing in Theorem~\ref{t2} is $G$-invariant but not necessarily invariant, we can ask the following question.

\begin{problem} Is it true that for any partition $G=A_1\cup\dots\cup A_n$ of a group $G$ some cell $A_i$ of the partition has $\covJ(A_iA_i^{-1})\le n$ for some invariant Boolean ideal $\mathcal J$ (for example, the ideal of small subsets) on $G$?
\end{problem}

Let us recall that a subset $A$ of a $G$-space $X$ is called {\em small} if $\cov(G\setminus FA)<\w$ for any finite subset $F\subset G$.

Corollary~\ref{c8.2} will help us to calculate the extremal densities of subgroups in groups. Below we assume that $1/\kappa=0$ for any infinite cardinal $\kappa$.

\begin{proposition}\label{t10.4} If $H$ is a subgroup of a group $G$, then
$\is_{12}(H)=\sis_{123}(H)=d^*(H)=1/{\cov(H)}$.
\end{proposition}

\begin{proof} Assume that the subgroup $H$ has infinite index $\cov(H)$ in $G$. We claim that $d^*(H)=0$. Assuming that $d^*(H)>0$ and applying Corollary~\ref{c8.2}, we conclude that
$\cov(H)=\cov(HH^{-1}HH^{-1})$ is finite and hence $H$ has finite index in $G$. This contradiction shows that $\is_{12}(H)\le\sis_{123}(H)\le d^*(H)=0=1/\cov(H)$.

Next, we assume that $H$ has finite index in $G$. Then the normal subgroup
$N=\bigcap_{x\in G}xHx^{-1}$ also has finite index in $G$. Consider the finite group $G/N$ and the quotient homomorphism $q:G\to G/N$. It follows that the subgroup $q(H)$ has index $\cov(q(H))=\cov(H)$ in the group $G/N$. By Proposition~\ref{p10.3}, $\is_{12}(q(H))=\sis_{123}(q(H))=d^*(H)=1/\cov(q(H))=1/\cov(H)$.
It can be shown that for each subset $A\subset G/N$ its preimage $q^{-1}(A)\subset G$ has densities
$\is_{12}(q^{-1}(A))=\is_{12}(A)$ and $d^*(q^{-1}(A))=d^*(A)$. In particular, for the subgroup $H=q^{-1}(q(H))$ we get
$\is_{12}(H)=\is_{12}(q(H))=1/\cov(H)$ and $d^*(H)=d^*(q(H))=1/\cov(H)$.
\end{proof}

Propositions~\ref{p10.1} and \ref{t10.4} imply:

\begin{proposition}\label{p10.6} For any group $G$ we get
$$[d^*=0]\subset [\sis_{123}{=}0]\subset[\wis_{12}{=}0]\subset [\is_{12}{=}0].$$
If the group $G$ is infinite, then the ideal $[d^*=0]$ contains all sets of cardinality $<|G|$ in $G$.
\end{proposition}

Next, we show that for any subset $A$ of a group $G$ with positive upper Banach density $d^*(A)$ there is an integer number $k$ dependent only on $d^*(A)$ such that the set $(A^{-1}A)^k$ is a subgroup of index $\le 1/d^*(G)$ in $G$.
Here for a subset $A\subset G$ its power $A^k\subset G$ is defined by induction: $A^1=A$ and $A^{k+1}=\{xy:x\in A^k,\;y\in A\}$ for $k\in\IN$.  We shall need the following fact proved in Lemma 12.3 of \cite{PB}.

\begin{proposition}\label{p11.1} If a symmetric subset $A=A^{-1}$ of a group $G$ has finite covering number $k=\cov(A)$, then the set $A^{4^{k-1}}$ is a subgroup of $G$.
\end{proposition}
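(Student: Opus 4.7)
I will argue by induction on $k=\cov(A)$. The base case $k=1$ is immediate: $G=fA$ for some $f\in G$ forces $A=f^{-1}G=G$, so $A^{4^{0}}=G$ is a subgroup.

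For the inductive step, the first observation is that $\cov(A^n)\le k$ for every $n\ge 1$. Indeed, since $A=A^{-1}$, the relation $G=FA$ with $|F|=k$ gives $G=G\cdot A=FA\cdot A=FA^2$, and iterating yields $G=FA^n$ for all $n\ge 1$. A second useful observation is that $A^2$ is symmetric and contains the identity $e=a\cdot a^{-1}$ (for any $a\in A$, using $A=A^{-1}$); in particular $A^2\subseteq A^4\subseteq A^8\subseteq\cdots$, and $A^2$ is a subgroup of $G$ precisely when $A^2=A^4$.

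The heart of the argument is the following dichotomy, which I expect to be the main technical obstacle: \emph{either $A^2$ is a subgroup of $G$, or $\cov(A^4)\le k-1$.} Granting it, the induction closes cleanly. If $A^2$ is a subgroup, then $A^{4^{k-1}}=(A^2)^{2\cdot 4^{k-2}}$ is a positive power of the subgroup $A^2$, hence equals $A^2$ itself, a subgroup. Otherwise $\cov(A^4)\le k-1$, and the inductive hypothesis applied to the symmetric set $A^4$ yields that $(A^4)^{4^{(k-1)-1}}=A^{4^{k-1}}$ is a subgroup.

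To prove the dichotomy, assume $A^2$ is not a subgroup, so $A^2\subsetneq A^4$; I wish to produce a cover $G=F'A^4$ with $|F'|\le k-1$. Starting from a minimal cover $G=FA^2$ with $|F|=k$ (which exists since $\cov(A^2)\le k$), I would analyse the intersection graph on $F$ whose edges are pairs $\{f,f'\}$ with $f^{-1}f'\in A^4$, equivalently $fA^2\cap f'A^2\ne\emptyset$. Using the strict inclusion $A^2\subsetneq A^4$, one ought to exhibit two distinct vertices $f,f'\in F$ whose $A^2$-translates both lie inside a single translate $f''A^4$; the pair can then be replaced by the single $A^4$-translate $f''A^4$ in the cover, yielding $\cov(A^4)\le k-1$. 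The delicate combinatorial point is to guarantee the existence of such a collapsible pair from the failure of closure of $A^2$; I expect this to follow from a pigeonhole estimate exploiting $\cov(A^2)\le k$, $\cov(A^4)\le\cov(A^2)$, and the strict inclusion $A^2\subsetneq A^4$.
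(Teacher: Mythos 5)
Your induction scheme is sound: the base case $k=1$ is right, the observation that $G=FA$ forces $G=FA^{n}$ for all $n$ (via $G=GA$) is correct, and the reduction to the stated dichotomy ("either $A^{2}$ is a subgroup, or $\cov(A^{4})\le k-1$") would close the induction cleanly. But the dichotomy itself -- which, as you say, is the entire content of the lemma -- is left unproved, and the graph you set up to attack it does not do what you need.

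Concretely, you declare an edge between $f$ and $f'$ when $f^{-1}f'\in A^{4}$, equivalently $fA^{2}\cap f'A^{2}\ne\emptyset$, and you hope to merge such a pair into one $A^{4}$-translate. But $f^{-1}f'\in A^{4}$ only gives $f'A^{2}=f(f^{-1}f')A^{2}\subseteq fA^{4}A^{2}=fA^{6}$, so the two $A^{2}$-translates sit inside a single $A^{6}$-translate, not a single $A^{4}$-translate; there is no reason an $f''$ exists with $fA^{2}\cup f'A^{2}\subseteq f''A^{4}$. The criterion that actually permits a merge is the stronger one $f^{-1}f'\in A^{2}$ (i.e.\ $fA\cap f'A\ne\emptyset$), which yields $f'A^{2}\subseteq fA^{4}$ and hence $G=(F\setminus\{f'\})A^{4}$. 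Running the argument with this correct criterion shows: if $\cov(A^{4})=k$, then for every minimal cover $G=FA$ the translates $\{fA\}_{f\in F}$ are pairwise disjoint. However, this disjointness by itself does not force $A^{2}=A^{4}$; to extract more, one must run the same merging argument one level up (comparing $fA^{2}$ and $f'A^{2}$), and that only produces $\cov(A^{8})\le k-1$, not $\cov(A^{4})\le k-1$. Feeding the weaker dichotomy "$A^{2}$ is a subgroup or $\cov(A^{8})\le k-1$" into your recursion gives $A^{8^{k-1}}$ (or $A^{2\cdot 4^{k-1}}$ after bookkeeping), which is strictly weaker than the claimed $A^{4^{k-1}}$. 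So the step that would recover the stated exponent $4^{k-1}$ is precisely the missing piece, and the pigeonhole argument you gesture at does not supply it. (The paper itself does not prove this proposition either; it cites Lemma 12.3 of the Protasov--Banakh monograph, so the actual argument to be matched is not in this text.)
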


Combining this proposition with Corollary~\ref{c8.2}, we get:

\begin{corollary}\label{c11.2} For any subset $A\subset G$ of positive upper Banach density $d^*(A)$ in a group $G$ and the number $k=\cov(AA^{-1}AA^{-1})\le1/d^*(A)$ the set $(AA^{-1})^{\frac124^{k}}$ is a subgroup of index $\le k$.
\end{corollary}

For partitions we can prove a bit more using Corollary~\ref{c8.3}.

\begin{corollary}\label{c11.3} For any partition $G=A_1\cup\dots\cup A_n$ of a group $G$ there is a cell $A_i$ of the partition such that the sets $(A_iA_i^{-1})^{4^{n-1}}$ is a subgroup of index $\le n$ in $G$.
\end{corollary}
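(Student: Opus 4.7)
My plan is to deduce the corollary directly from Theorem~\ref{t11.2} via a pigeonhole step using the subadditivity of $\sis_{123}$, followed by a small monotonicity adjustment of the exponent. All the hard work is already encoded in Theorem~\ref{t11.2}; what remains is to trade the cell-dependent exponent $4^{\Phi_G(27/(4s^3))}$ for the uniform exponent $k = 4^{\Phi_G(27n^3/4)}$ from the statement.

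First, because $\sis_{123}$ is a subadditive density on $G$ with $\sis_{123}(G) = 1$, the partition $G = A_1\cup\dots\cup A_n$ must contain a cell $A_i$ with $s := \sis_{123}(A_i) \ge 1/n$. Theorem~\ref{t11.2}, applied to $P = A_i$, then yields an exponent $k' := 4^{\Phi_G(27/(4s^3))}$ for which $H := (A_i^{-1}A_i)^{k'}$ is a subgroup of $G$ of index $\cov(H) \le 1/s \le n$.

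Next I would pass from $k'$ to $k$. The inequality $s\ge 1/n$ gives $27/(4s^3)\le 27n^3/4$, and $\Phi_G$ is nondecreasing in its argument (since the family $G/m$ of covers of size $\le m$ grows with $m$), so $k'\le k$. To conclude it suffices to check that $(A_i^{-1}A_i)^k = H$. Since $s>0$ the cell $A_i$ is non-empty, so $e\in A_i^{-1}A_i$. This yields two inclusions: on the one hand $A_i^{-1}A_i\subseteq H$ (by taking the product $(A_i^{-1}A_i)\cdot e^{k'-1}$), so closure of $H$ under multiplication gives $(A_i^{-1}A_i)^k\subseteq H^k=H$; on the other hand, inserting copies of $e$ gives $H=(A_i^{-1}A_i)^{k'}\subseteq (A_i^{-1}A_i)^k$ for any $k\ge k'$. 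Hence $(A_i^{-1}A_i)^k = H$ is a subgroup of index at most $n$ in $G$.

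The only genuinely delicate step is this last exponent-stabilization, and it is resolved entirely by the observation that the identity lies in $A_i^{-1}A_i$, which forces the sequence of powers to be increasing. Everything else is either a direct invocation of Theorem~\ref{t11.2} or a monotonicity remark about $\Phi_G$.
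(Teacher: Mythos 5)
Your proof is correct and follows the route the paper intends: the paper offers no explicit proof, stating only that subadditivity of $\sis_{123}$ plus Theorem~\ref{t11.2} imply the corollary. You have filled in the one step the paper leaves silent, namely that Theorem~\ref{t11.2} produces the exponent $k' = 4^{\Phi_G(27/(4s^3))}$ which depends on the cell $A_i$ through $s = \sis_{123}(A_i)$, whereas the corollary asserts the uniform exponent $k = 4^{\Phi_G(27n^3/4)}$. Your stabilization argument — $\Phi_G$ is nondecreasing (covers of size $\le m$ include covers of size $\le m'$ for $m' \le m$), hence $k' \le k$; and since $A_i \ne \emptyset$ forces $e \in A_i^{-1}A_i$, the powers $(A_i^{-1}A_i)^m$ stabilize at the subgroup $H = (A_i^{-1}A_i)^{k'}$ for all $m \ge k'$ — is exactly the missing justification, and both inclusions you give for $(A_i^{-1}A_i)^k = H$ are correct. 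The rest (pigeonhole via subadditivity to get a cell with $s \ge 1/n$, then apply Theorem~\ref{t11.2}) is verbatim what the paper has in mind.
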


\begin{proof} By Corollary~\ref{c8.3}, some cell $A_i$ of the partition has $\cov(A_iA_i^{-1})\le n=1/d^*(A_i)$ or $\cov\big((A_iA_i^{-1})^4\big)\le\cov\big((A_iA_i^{-1})^2\big)\le 1/d^*(A_i)<n$. In the first case $H=(A_iA_i^{-1})^{4^{n-1}}$ is a subgroup of $G$. In the second case $((A_iA_i^{-1})^4)^{4^{n-2}}=(A_iA_i^{-1})^{4^{n-1}}=H$ also is a subgroup of $G$. In both cases $H=(A_iA_i)^{4^{n-1}}$ is a subgroup of finite index $\cov(H)=\frac1{d^*(H)}\le \frac1{d^*(A_i)}\le n$.
\end{proof}

A subset $A$ of a group $G$ will be called a {\em shifted subgroup} if $A=xHy$ for some subgroup $H$ and some points $x,y\in G$. Observe that for a shifted subgroup $A$ the sets $AA^{-1}=xHx^{-1}$ and $A^{-1}A=y^{-1}Hy$ are subgroups conjugated to $H$ and $A=AA^{-1}xy=xyA^{-1}A$.

Corollary~\ref{c11.3} implies the following old result of Neumann \cite{Neumann}.

\begin{proposition}[Neumann]\label{c11.4} For any cover $G=A_1\cup\dots\cup A_n$ of a group $G$ by shifted subgroups some shifted subgroup $A_i$ has $\cov(A_i)\le n$.
\end{proposition}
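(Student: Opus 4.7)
The plan is to deduce this from Corollary~\ref{c11.3} by exploiting the fact that every shifted subgroup $A = xHy$ has the special property that $A^{-1}A = y^{-1}Hy$ is already a genuine subgroup of $G$, so taking powers of $A^{-1}A$ does not enlarge it.

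First I would turn the cover into a partition. Set $A'_1 = A_1$ and $A'_i = A_i \setminus (A_1 \cup \dots \cup A_{i-1})$ for $1 < i \le n$; this gives a (possibly degenerate) partition $G = A'_1 \cup \dots \cup A'_n$ with $A'_i \subseteq A_i$ for each $i$. Applying Corollary~\ref{c11.3} to this partition produces a cell $A'_i$ and a number $k = 4^{\Phi_G(27n^3/4)}$ for which $(A_i'^{-1}A'_i)^k$ is a subgroup of index at most $n$ in $G$. Note that the cell $A'_i$ selected in that corollary is necessarily non-empty, since the argument behind Corollary~\ref{c11.3} produces a cell with $\sis_{123}(A'_i) \ge 1/n > 0$.

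Next I would use the shifted-subgroup structure. Writing $A_i = x_i H_i y_i$ with $H_i$ a subgroup, one computes
\[
A_i^{-1}A_i \;=\; (x_i H_i y_i)^{-1}(x_i H_i y_i) \;=\; y_i^{-1} H_i y_i,
\]
which is the subgroup $H'_i := y_i^{-1} H_i y_i$ of $G$. Since $A'_i \subseteq A_i$, we get $A_i'^{-1}A'_i \subseteq H'_i$, and therefore $(A_i'^{-1}A'_i)^k \subseteq H'_i$. The subgroup on the left has index at most $n$ in $G$ by the previous paragraph, so its overgroup $H'_i$ also has index at most $n$. Finally, $A_i = x_i H_i y_i = (x_i y_i)\, H'_i$ is a left coset of $H'_i$, hence
\[
\cov(A_i) \;=\; [G:H'_i] \;\le\; n,
\]
as required.

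The only point that needs a moment's care is the non-emptiness of the chosen $A'_i$ (so that $A_i'^{-1}A'_i$ is actually a non-empty subset of $H'_i$, and the coset computation makes sense); this is handed to us for free because Corollary~\ref{c11.3} is obtained via the subadditive submeasure $\sis_{123}$, which forces a positive-measure, hence non-empty, cell. Everything else is a direct algebraic check, so I do not anticipate any genuine obstacle.
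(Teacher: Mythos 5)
Your proof is correct and follows essentially the same route as the paper: deduce the result from Corollary~\ref{c11.3} plus the observation that $A_i^{-1}A_i$ is a genuine subgroup when $A_i$ is a shifted subgroup, and that $A_i$ is a coset of it. The paper's own argument is terser --- it applies Corollary~\ref{c11.3} directly to the cover (implicitly using that a cover also yields a cell with $\sis_{123}\ge 1/n$) and tacitly uses that $(A_i^{-1}A_i)^k = A_i^{-1}A_i$ for a shifted subgroup, whereas you explicitly disjointify the cover into a partition and then use the containment $(A_i'^{-1}A_i')^k\subseteq A_i^{-1}A_i$ to transfer the finite-index conclusion; your version is a bit longer but more scrupulous about the cover-versus-partition mismatch in the statement of Corollary~\ref{c11.3}.
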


\begin{proof} By Corollary~\ref{c11.3}, for some shifted subgroup $A_i$ the subgroup $A_i^{-1}A_i$ has index $\cov(A_i^{-1}A_i)\le n$. Since $A_i=xA_i^{-1}A_i$ for some $x\in G$, we conclude that $\cov(A_i)=\cov(xA_i^{-1}A_i)=\cov(A_i^{-1}A_i)\le n$.
\end{proof}

\section{Applications of the density $\is_{12}$ to $\IP^*$-sets}\label{s12n}

In this section we present an application of the density $\is_{12}$ to $\IP^*$ sets.
Following \cite[16.5]{HS98}, we call a subset $A$ of a group $G$ an {\em $\IP^*$-set} if for any sequence $(x_n)_{n\in\w}$ in $G$ there are indices $i_1<i_2<\dots <i_n$ such that $x_{i_1}x_{i_2}\dots x_{i_k}\in A$. By Theorem~5.12 of \cite{HS98}, any $\IP^*$-set $A\subset G$ belongs to every idempotent of the compact right-topological semigroup $\beta G$, and hence has a rich combinatorial structure, see \cite[\S14]{HS98}. The following theorem can be considered as a ``non-amenable'' generalization of Theorem~3.1 \cite{BHMc}.

\begin{proposition}\label{p12.1n} Let $G$ be a group endowed with a left-invariant Boolean ideal $\I\subset [\wis_{12}{=}0]$. If a set $A$ of a group $G$ has positive density $\is_{12}(A)>0$, then for every sequence $(x_i)_{i=1}^n$ of length $n> 1/\is_{12}(A)$ in $G$ there are two numbers $k<m\le n$ such that $x_{k+1}\dots x_m\in \Delta_\I(A)$. Consequently, $\Delta_\I(A)$ is an $\IP^*$-set.
\end{proposition}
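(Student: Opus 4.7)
The plan is to set up a pigeonhole argument on the partial products of the sequence $(x_i)_{i=1}^n$, using the packing-index bound from Proposition~\ref{p5.2}.

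Put $s_0=e$ and $s_j=x_1x_2\cdots x_j$ for $j=1,\dots,n$, so that for every $0\le k<m\le n$ we have the telescoping identity $x_{k+1}\cdots x_m=s_k^{-1}s_m$. Since $\I$ is left-invariant, for each pair $k<m$ the condition $x_{k+1}\cdots x_m\in\Delta_\I(A)$, which reads $A\cap (s_k^{-1}s_m)A\notin\I$, is equivalent to $s_kA\cap s_mA\notin\I$ after translating by $s_k$. Thus the statement to prove becomes: among the $n+1$ shifts $s_0A,s_1A,\dots,s_nA$, at least two meet in a set that is not in $\I$.

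Suppose, for a contradiction, that $s_kA\cap s_mA\in\I$ for all $0\le k<m\le n$. First observe that the partial products $s_0,\dots,s_n$ must then be pairwise distinct: the invariance of $\is_{12}$ gives $\is_{12}(s_jA)=\is_{12}(A)>0$, and the chain of inequalities $\is_{12}\le\wis_{12}$ from Proposition~\ref{p10.1} yields $\I\subset[\wis_{12}{=}0]\subset[\is_{12}{=}0]$, so $s_jA\notin\I$; hence an equality $s_j=s_k$ with $j<k$ would force $s_jA\cap s_kA=s_jA\notin\I$, contradicting our assumption. So the set $E=\{s_0,s_1,\dots,s_n\}$ has cardinality $|E|=n+1$ and witnesses $\Ipack(A)\ge n+1$. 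On the other hand, Proposition~\ref{p5.2} applied to the left-invariant ideal $\I\subset[\wis_{12}{=}0]$ gives $\Ipack(A)\le 1/\is_{12}(A)<n+1$, where the last inequality uses the hypothesis $n>1/\is_{12}(A)$. This contradiction proves the first assertion.

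The IP$^*$-claim follows at once: given any sequence $(x_i)_{i\in\w}$ in $G$, choose any integer $n>1/\is_{12}(A)$ and apply the first part to the initial segment $x_1,\dots,x_n$; the resulting indices $k+1<k+2<\cdots<m$ give a strictly increasing finite index set whose corresponding product lies in $\Delta_\I(A)$, so $\Delta_\I(A)$ meets every IP-set and is therefore an IP$^*$-set. The only delicate point in the argument is the distinctness of the partial products, which is really where the hypothesis $\is_{12}(A)>0$ (rather than merely $A\notin\I$) and the inclusion $\I\subset[\is_{12}{=}0]$ are used; the rest is bookkeeping around the definition of $\Delta_\I$ and the packing-index inequality of Proposition~\ref{p5.2}.
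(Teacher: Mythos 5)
Your proof is correct and follows essentially the same strategy as the paper's: consider the partial products $s_j = x_1\cdots x_j$ and invoke the packing-index bound $\Ipack(A)\le 1/\is_{12}(A)$ from Proposition~\ref{p5.2} to force two of them to have intersecting translates $s_kA\cap s_mA\notin\I$, then translate back by left-invariance of $\I$. The paper's version is terser (it uses $P=\{x_1\cdots x_k:1\le k\le n\}$ and leaves implicit the case where partial products repeat, which is harmless since a repeat immediately yields $e\in\Delta_\I(A)$), whereas you make the contrapositive explicit, include $s_0=e$ to get $n+1$ candidate translates, and carefully justify why the $s_j$ must be pairwise distinct via $\I\subset[\wis_{12}{=}0]\subset[\is_{12}{=}0]$; these additions sharpen and complete the same argument rather than changing its route.
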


\begin{proof} Consider the set $P=\{x_1\cdots x_k:1\le k\le n\}$. By Proposition~\ref{p5.2}, $\Ipack(A)\le 1/\is_{12}(A)<n$. Consequently there are two numbers $k<m\le n$ such that $x_1\dots x_kA\cap x_1\dots x_mA\notin \I$. The left invariance of the Boolean ideal $\I$ implies that $A\cap x_{k+1}\dots x_mA\notin \I$ and hence $x_{k+1}\dots x_m\in\Delta_\I(A)$.
\end{proof}

By analogy, we can use Proposition~\ref{p6.3} to prove:

\begin{proposition}\label{p12.2n} Let $G$ be a group endowed with a left-invariant Boolean ideal $\I\subset [\wus_{12}{=}0]$. If a set $A$ of a group $G$ has positive density $\us_{12}(A)>0$, then for every sequence $(x_i)_{i=1}^n$ of length $n> 1/\us_{12}(A)$ in $G$ there are two numbers $k<m\le n$ such that $x_{k+1}\dots x_m\in \Delta_\I(A)$. Consequently, $\Delta_\I(A)$ is an $\IP^*$-set.
\end{proposition}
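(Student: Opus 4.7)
The plan is to carry out exactly the analog of Proposition~\ref{p12.1n}, substituting the packing-index bound from Proposition~\ref{p6.3} for the one from Proposition~\ref{p5.2}. Concretely, given a sequence $(x_i)_{i=1}^n$ with $n>1/\us_{12}(A)$, I would form the $n$ partial products $y_k=x_1x_2\cdots x_k$ for $1\le k\le n$ and think of the collection of left shifts $\{y_kA:1\le k\le n\}$ as candidates for an $\I$-packing of $A$.

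Since $\I\subset[\wus_{12}{=}0]$ and $\us_{12}(A)>0$, Proposition~\ref{p6.3} applies and yields
\[
\Ipack(A)\le 1/\us_{12}(A)<n.
\]
Hence the family $\{y_1A,\dots,y_nA\}$ of $n$ shifts of $A$ cannot be an $\I$-packing, so there exist indices $k<m\le n$ with $y_kA\cap y_mA\notin\I$. Left-invariance of the ideal $\I$ gives $A\cap y_k^{-1}y_mA=A\cap(x_{k+1}\cdots x_m)A\notin\I$, which is precisely $x_{k+1}\cdots x_m\in\Delta_\I(A)$. This establishes the first assertion.

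For the consequence, I would check the $\IP^*$-property directly from the definition: given any infinite sequence $(x_i)_{i\in\w}$ in $G$, pick any integer $n>1/\us_{12}(A)$ and apply the first part to the initial segment $(x_i)_{i=1}^n$ to obtain $k<m\le n$ with $x_{k+1}\cdots x_m\in\Delta_\I(A)$. The indices $i_1=k+1<i_2=k+2<\cdots<i_{m-k}=m$ then witness that $\Delta_\I(A)$ meets the IP-sum generated by $(x_i)$; since the sequence was arbitrary, $\Delta_\I(A)$ is an $\IP^*$-set. There is no real obstacle here: the only nontrivial ingredient is the packing inequality $\Ipack(A)\le 1/\us_{12}(A)$ of Proposition~\ref{p6.3}, which is already in hand; everything else is a routine transcription of the argument for Proposition~\ref{p12.1n}.
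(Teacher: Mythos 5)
Your proof is correct and is exactly the argument the paper has in mind: the paper itself gives no explicit proof of Proposition~\ref{p12.2n}, stating only that it follows "by analogy" from Proposition~\ref{p12.1n} via the packing bound $\Ipack(A)\le 1/\us_{12}(A)$ of Proposition~\ref{p6.3}, and your transcription of that analogy is faithful. (One cosmetic point, present equally in the paper's proof of Proposition~\ref{p12.1n}: if the partial products $y_k$ are not pairwise distinct, say $y_k=y_m$, then $x_{k+1}\cdots x_m=e\in\Delta_\I(A)$ trivially since $A\notin\I$; so the case of repeated partial products causes no harm, and the packing argument applies when they are distinct.)
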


Since any conjugacy-invariant set $A=A^{\conj[G]}=\bigcup_{x\in G}x^{-1}Ax$ has submeasure $\sis_{123}(A)=\is_{12}(A)$, Proposition~\ref{p12.1n} implies:

\begin{corollary}\label{c12.3n} Let $G$ be a group endowed with a left-invariant Boolean ideal $\I\subset [\wis_{12}{=}0]$. If a set $A$ of a group $G$ has positive density $\sis_{123}(A)>0$, then for every sequence $(x_i)_{i=1}^n$ of length $n> 1/\sis_{123}(A)$ in $G$ there are two numbers $k<m\le n$ such that $x_{k+1}\dots x_m\in \Delta_\I(A^{\conj[G]})$. Consequently, $\Delta_\I(A^{\conj[G]})$ is an $\IP^*$-set.
\end{corollary}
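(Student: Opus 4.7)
The plan is to reduce this statement to Proposition~\ref{p12.1n} applied to the conjugacy-closure
$$B:=A^{\conj[G]}=\bigcup_{x\in G}xAx^{-1},$$
exploiting the fact that $B$ is conjugacy-invariant by construction. As noted immediately before Corollary~\ref{c7.4n}, for every conjugacy-invariant subset of $G$ the extremal densities $\iss_{213}$ and $\is_{12}$ coincide, so $\iss_{213}(B)=\is_{12}(B)$. Combining this with monotonicity of $\iss_{213}$ and the inclusion $A\subset B$ yields
$$\is_{12}(B)=\iss_{213}(B)\ge\iss_{213}(A)>0,$$
which is the decisive inequality.

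First I would establish the finite (indexed) assertion. The estimate above gives $n>1/\iss_{213}(A)\ge 1/\is_{12}(B)$, so the hypotheses of Proposition~\ref{p12.1n} are satisfied with $A$ replaced by $B$ and the same left-invariant ideal $\I$ (note that the hypothesis $\I\subset[\wis_{12}{=}0]$ depends only on $\I$, not on the chosen set, so it is preserved). Applying that proposition to $B$ and to the given sequence $(x_i)_{i=1}^n$ delivers indices $k<m\le n$ with $x_{k+1}\cdots x_m\in\Delta_\I(B)=\Delta_\I(A^{\conj[G]})$, as required.

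For the $\IP^*$ consequence I would argue as follows. Given any infinite sequence $(x_i)_{i\in\w}$ in $G$, fix an integer $n>1/\iss_{213}(A)$ and apply the finite assertion already proved to the initial segment $(x_i)_{i=1}^n$. The resulting product $x_{k+1}\cdots x_m$ with $1\le k<m\le n$ is a non-empty product of terms of the sequence taken with strictly increasing indices, and it lies in $\Delta_\I(A^{\conj[G]})$. By the definition of an $\IP^*$-set recalled at the start of Section~\ref{s12n}, this shows that $\Delta_\I(A^{\conj[G]})$ is $\IP^*$.

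The argument is essentially a one-step transfer from $A$ to its conjugacy-closure, followed by a direct invocation of Proposition~\ref{p12.1n}, so no real obstacle arises. The only point deserving a line of verification is the twin use of the identity $\iss_{213}(B)=\is_{12}(B)$ for conjugacy-invariant $B$ together with monotonicity $\iss_{213}(B)\ge\iss_{213}(A)$; together they force $\is_{12}(B)\ge\iss_{213}(A)$, which is exactly what is needed to trigger Proposition~\ref{p12.1n} with the original bound $n>1/\iss_{213}(A)$.
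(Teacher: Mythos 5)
Your argument is correct and is exactly the paper's intended reasoning: pass from $A$ to its conjugacy-closure $B=A^{\conj[G]}$, use $\iss_{213}(B)=\is_{12}(B)$ for conjugacy-invariant sets together with monotonicity to get $\is_{12}(B)\ge\iss_{213}(A)>0$, and invoke Proposition~\ref{p12.1n} for $B$. You also correctly read the statement's $\iss_{12}$ as a typo for $\iss_{213}$; no gaps.
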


The subadditivity of the submeasure $\sis_{123}$ and Corollary~\ref{c12.3n} implies:

\begin{corollary} For any partition $G=A_1\cup\dots\cup A_n$ of a group $G$ endowed with a non-trivial left-invariant ideal $\I\subset[\wis_{12}{=}0]$, there is a cell $A_i$ of the partition such that $\Delta_\I(A^{\conj[G]})$ is an $\IP^*$-set.
\end{corollary}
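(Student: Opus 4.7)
The plan is to combine the subadditivity of $\iss_{213}$ with Corollary~\ref{c12.3n} in a completely straightforward way. First, since $\iss_{213}$ is a submeasure on $G$ (hence subadditive with $\iss_{213}(G)=1$), from the decomposition $G=A_1\cup\dots\cup A_n$ we immediately get some cell $A_i$ with
$$\iss_{213}(A_i)\ge \frac{1}{n}>0.$$

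Next, I would pass from $A_i$ to its conjugacy saturation $A_i^{\conj[G]}=\bigcup_{x\in G}xA_ix^{-1}$. Since $A_i^{\conj[G]}$ is conjugacy-invariant, the identity $\iss_{213}(B)=\is_{12}(B)$ valid for conjugacy-invariant $B$ (used in the excerpt just before Corollary~\ref{c7.4n}) gives
$$\is_{12}(A_i^{\conj[G]})=\iss_{213}(A_i^{\conj[G]})\ge \iss_{213}(A_i)\ge \tfrac1n>0,$$
where the middle inequality uses monotonicity together with $A_i\subset A_i^{\conj[G]}$. Thus the cell $A_i^{\conj[G]}$ has strictly positive $\is_{12}$-density.

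Now Corollary~\ref{c12.3n} (equivalently, Proposition~\ref{p12.1n} applied to $A_i^{\conj[G]}$, using the hypothesis $\I\subset[\wis_{12}{=}0]$) applies verbatim: for every sequence $(x_j)_{j=1}^{m}$ of length $m>1/\is_{12}(A_i^{\conj[G]})$ in $G$, there are indices $k<\ell\le m$ with $x_{k+1}\cdots x_\ell\in\Delta_\I(A_i^{\conj[G]})$. Since $1/\is_{12}(A_i^{\conj[G]})\le n$ is a fixed finite bound, any infinite sequence in $G$ produces such a finite product inside $\Delta_\I(A_i^{\conj[G]})$, which is precisely the defining property of an $\IP^*$-set. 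This yields the desired conclusion.

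There is essentially no obstacle here beyond bookkeeping: the real content has already been packed into the subadditivity of $\iss_{213}$ (used to extract a large cell), the identity $\iss_{213}=\is_{12}$ on conjugacy-invariant sets (used to transfer positivity to $\is_{12}$), and the $\IP^*$-statement of Proposition~\ref{p12.1n} (used to conclude). The only point deserving a word of care is the legitimacy of applying Proposition~\ref{p12.1n} with the ambient ideal $\I$: this is guaranteed by the standing hypothesis $\I\subset[\wis_{12}{=}0]$, which is exactly the hypothesis of Proposition~\ref{p12.1n}.
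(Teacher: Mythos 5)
Your proof is correct and follows exactly the route the paper intends: extract a cell with $\iss_{213}(A_i)\ge 1/n$ by subadditivity, pass to $A_i^{\conj[G]}$ via the identity $\iss_{213}=\is_{12}$ on conjugacy-invariant sets, and then apply Proposition~\ref{p12.1n} (equivalently Corollary~\ref{c12.3n}). You merely unpack the dependence of Corollary~\ref{c12.3n} on Proposition~\ref{p12.1n} a bit more explicitly than the paper, which simply cites Corollary~\ref{c12.3n} together with subadditivity of $\iss_{213}$.
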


\begin{remark} By Theorem 3.8 of \cite{BHMc}, the free group with two generators $F_2$ can be covered by two sets $A,B$ such that neither $AA^{-1}$ not $BB^{-1}$ is an $\IP^*$-set. This example shows that the free group $F_2$ admits no subadditive density $\mu:\PP(G)\to[0,1]$ such that $AA^{-1}$ is an $\IP^*$-set for any set $A\subset G$ of positive density $\mu(A)>0$.
\end{remark}

\section{Some Open Problems with Comments}\label{s12}

In this section we collect some problems related to Problems~\ref{prob1}--\ref{prob3}.

Motivated by Theorem~\ref{t2.1}, in \cite[Question F]{Erde}, J.~Erde asked whether, given a partition $\PP$ of an infinite group $G$ with $|\PP|<|G|$, there is $A\in\PP$ such that $\cov(AA^{-1})$ is finite.
The following extremely negatively answer to this question was obtained in \cite{b1}: Any infinite group $G$
admits a countable partition $G=\bigcup_{n\in\w}A_n$ such that $\cov(A_nA_n^{-1})\ge \mathrm{cf}(|G|)$ for each $n$.

\begin{problem}\label{p1n} Does each infinite group $G$ admit a countable partition $G=\bigcup_{n\in\w}A_n$ such that $\cov(A_nA_n^{-1})=|G|$ for all $n\in\w$?
\end{problem}

The answer to this problem is affirmative if the group $G$ is residually finite (in particular, Abelian or free), see \cite{b1}.
A stronger version of Problem~\ref{p1n} was considered in \cite{b2}.

\begin{question}
Does every infinite group $G$ admit a countable partition $G=\bigcup_{n<\w}A_n$ such that $\cov(A_n)=|G|$ for each $n\in\w$?
\end{question}

A subset $A\subset X$ of a $G$-space $X$ is called {\em $m$-thick} for a natural number $m$ if for each set $F\subset G$ of cardinality $|F|\le m$ there is a point $x\in X$ such that $Fx\subset A$.
A subset $A\subset G$ is {\em thick} if it is $m$-thick for every $m\in\IN$.
 Observe that a set $A\subset X$ is 2-thick if and only if $\Delta_\I(A)=G$ for the smallest ideal $\I=\{\emptyset\}$. The following proposition was proved in \cite[1.3]{BPS}.

\begin{proposition}\label{p13.1} For any partition $X=A_1\cup\dots\cup A_n$ of a $G$-space $X$ and any $m\in\IN$ there are a cell $A_i$ of the partition and a subset $F\subset G$ of cardinality $|F|\le m^{n-1}$ such that the set $FA_i$ is $m$-thick.
\end{proposition}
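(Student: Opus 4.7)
The plan is to prove the proposition by induction on $n$, the number of cells in the partition, taking advantage of the fact that the failure of $m$-thickness for a single cell provides a natural reduction step.

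For the base case $n=1$ the partition is $X=A_1$, so $A_1=X$ is $m$-thick for every $m$ (trivially, if $X$ is nonempty; otherwise there is nothing to prove), and $F=\{e\}$ with $|F|=1=m^0$ does the job. For the inductive step, assume the statement holds for every partition of a $G$-space into at most $n-1$ cells. Given a partition $X=A_1\cup\dots\cup A_n$, I would first check whether some cell, say $A_n$, is already $m$-thick: if so, take $F=\{e\}$ and $i=n$ and we are done with room to spare. The interesting case is when $A_n$ is not $m$-thick.

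In that case, by the very definition of $m$-thickness there exists a subset $E\subset G$ of cardinality $|E|\le m$ such that for every $x\in X$ there is some $g\in E$ with $gx\notin A_n$, i.e., $gx\in A_1\cup\dots\cup A_{n-1}$. Rewriting this condition as an equality of sets yields
\[
X=\bigcup_{g\in E}g^{-1}(A_1\cup\dots\cup A_{n-1})=\bigcup_{i=1}^{n-1}E^{-1}A_i,
\]
so the $n-1$ sets $E^{-1}A_1,\dots,E^{-1}A_{n-1}$ cover $X$. Refining this cover to a partition $X=B_1\cup\dots\cup B_{n-1}$ with $B_i\subset E^{-1}A_i$ (for instance by removing from each $E^{-1}A_i$ the union of the previous ones) and applying the inductive hypothesis, I obtain an index $i_0<n$ and a set $F'\subset G$ with $|F'|\le m^{n-2}$ such that $F'B_{i_0}$ is $m$-thick. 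Since $B_{i_0}\subset E^{-1}A_{i_0}$, the larger set $F'(E^{-1}A_{i_0})=(F'E^{-1})A_{i_0}$ is also $m$-thick (any superset of an $m$-thick set is $m$-thick), so $F:=F'E^{-1}$ and the index $i_0$ solve the problem: $FA_{i_0}$ is $m$-thick and $|F|\le|F'|\cdot|E|\le m^{n-2}\cdot m=m^{n-1}$.

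There is no serious obstacle here; the whole proof is driven by the clean dichotomy ``either $A_n$ is already $m$-thick, or its failure gives a uniform $m$-element shift collapsing the partition by one cell.'' The only minor points to watch are: (i) the witness $E$ for non-$m$-thickness may have fewer than $m$ elements, but the bound $|E|\le m$ is all that the cardinality estimate needs; (ii) the inductive hypothesis is stated for partitions, so the cover of $X$ by $E^{-1}A_1,\dots,E^{-1}A_{n-1}$ must be refined to a partition before applying it, which is harmless since passing to subsets of the $E^{-1}A_i$ only strengthens the $m$-thickness conclusion when we enlarge back.
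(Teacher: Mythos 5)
Your proof is correct: the induction on $n$, with the dichotomy between $A_n$ being $m$-thick (take $F=\{e\}$) and not (use the $\le m$-element witness $E$ of non-thickness to show $\bigcup_{i<n}E^{-1}A_i=X$, refine to a partition, apply the inductive hypothesis, and multiply the resulting $F'$ by $E^{-1}$), cleanly produces the bound $m^{n-1}$. This is essentially the same reduction that the cited source uses, and all the small points you flag (that $|E|\le m$ rather than $=m$, that passing to subsets $B_i\subset E^{-1}A_i$ before invoking the hypothesis and then enlarging back preserves $m$-thickness) are handled correctly.
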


\begin{corollary} For any partition $X=A_1\cup\dots\cup A_n$ of a $G$-space $X$ there are a cell $A_i$ of the partition and a subset $F\subset G$ of cardinality $|F|\le 2^{n-1}$ such that $\Delta_\I(FA)=G$ for the smallest Boolean ideal $\I={\{\emptyset\}}$ on $X$.
\end{corollary}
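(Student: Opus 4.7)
The plan is to invoke Proposition~\ref{p13.1} with $m=2$. This directly produces a cell $A_i$ of the partition and a subset $F\subset G$ of cardinality $|F|\le 2^{n-1}$ such that $FA_i$ is $2$-thick in $X$. The corollary then follows once we recognize that $2$-thickness of a subset $B\subset X$ is equivalent, for the smallest ideal $\I=\{\emptyset\}$, to the condition $\Delta_\I(B)=G$, which is precisely the observation recorded just before Proposition~\ref{p13.1}.

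The only thing worth spelling out is the implication $B$ is $2$-thick $\Rightarrow$ $\Delta_\I(B)=G$, since this is what we actually need. Given an arbitrary $g\in G$, apply $2$-thickness to the two-element set $\{e,g\}\subset G$: we obtain $x\in X$ with $\{x,gx\}\subset B$. Then $gx\in B\cap gB$, so $B\cap gB\ne\emptyset$, which by the definition of $\Delta_\I$ for $\I=\{\emptyset\}$ means $g\in\Delta_\I(B)$. Since $g\in G$ was arbitrary, $\Delta_\I(B)=G$. Applying this to $B=FA_i$ finishes the proof.

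No real obstacle arises here: the corollary is essentially a restatement of Proposition~\ref{p13.1} in the $m=2$ case via the standard dictionary between $2$-thickness and the $\Delta$-operation. The only place where one must be mildly careful is in remembering that the action is a left action, so that $Fx\subset B$ for $F=\{e,g\}$ translates correctly into $gx\in B\cap gB$ rather than into a statement about $B\cap g^{-1}B$; this is a cosmetic issue handled by the symmetry of $\Delta_\I(B)$ noted after its definition.
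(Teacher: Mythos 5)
Your proof is correct and is precisely the intended derivation: the paper presents this as an immediate corollary of Proposition~\ref{p13.1} with $m=2$, combined with the observation stated just before that proposition that $2$-thickness of a set $B\subset X$ is equivalent to $\Delta_{\{\emptyset\}}(B)=G$. Your explicit verification of the implication (taking $F=\{e,g\}$ and noting $gx\in B\cap gB$) is exactly the standard unwinding of that equivalence.
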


\begin{corollary} For any partition $G=A_1\cup\dots\cup A_n$ of a group $G$ there is a cell $A_i$ of the partition such that $G=FA_iA_i^{-1}F^{-1}$ for some set $F\subset G$ of cardinality $|F|\le 2^{n-1}$.
\end{corollary}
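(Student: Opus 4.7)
The plan is to derive this corollary as an immediate specialization of the preceding one (which itself comes from Proposition~\ref{p13.1} applied with $m=2$). The only substantive point is translating the $\I$-difference condition into the two-sided product $FA_iA_i^{-1}F^{-1}$, which requires nothing more than unfolding definitions when $\I=\{\emptyset\}$.

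First I would invoke Proposition~\ref{p13.1} with $m=2$, viewing the group $G$ as a $G$-space over itself. This yields a cell $A_i$ and a set $F\subset G$ with $|F|\le 2^{n-1}$ such that the set $B:=FA_i$ is $2$-thick in $G$. Next, I would record the elementary (already implicitly used) fact that a subset $B\subset G$ is $2$-thick if and only if $BB^{-1}=G$: given $g\in G$, apply $2$-thickness to $\{e,g\}$ to find $x\in G$ with both $x\in B$ and $gx\in B$, hence $g=(gx)x^{-1}\in BB^{-1}$; conversely, any $g\in BB^{-1}$ can be written $g=b_1b_2^{-1}$ and one sets $x=b_2$.

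Applying this equivalence to $B=FA_i$ yields
\[
G=(FA_i)(FA_i)^{-1}=FA_iA_i^{-1}F^{-1},
\]
with $|F|\le 2^{n-1}$, which is exactly the desired conclusion. Equivalently, one can bypass the intermediate $2$-thickness step and cite the preceding corollary directly with $X=G$ and $\I=\{\emptyset\}$, using the identity $\Delta_{\{\emptyset\}}(B)=BB^{-1}$ recorded in the introduction.

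There is no real obstacle here, since the content of the statement is already encoded in Proposition~\ref{p13.1}; the only thing to be mildly careful about is the notational mismatch (a stray $A$ for $A_i$ in the preceding corollary) and the routine verification that $2$-thickness of $FA_i$ is equivalent to $(FA_i)(FA_i)^{-1}=G$.
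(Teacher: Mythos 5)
Your proof is correct and follows essentially the same route the paper intends: apply Proposition~\ref{p13.1} with $m=2$ to get $F$ with $|F|\le 2^{n-1}$ such that $FA_i$ is $2$-thick, then use the recorded equivalence between $2$-thickness and $\Delta_{\{\emptyset\}}(B)=BB^{-1}=G$ to conclude $G=(FA_i)(FA_i)^{-1}=FA_iA_i^{-1}F^{-1}$. The small verification of that equivalence is accurate, and your remark that one may instead specialize the preceding corollary to $X=G$ with $\I=\{\emptyset\}$ is the same argument in compressed form.
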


\begin{problem}\label{prob12.4} Is it true that for each partition $G=A_1\cup\dots\cup A_n$ of a group $G$ there is a cell $A_i$ of the partition such that $G=FA_iA_i^{-1}F^{-1}$ for some set $F\subset G$ of cardinality $|F|\le n$.
\end{problem}

The following problem is stronger than Problem~\ref{prob12.4} but weaker than Problem~\ref{prob1}.

\begin{problem}\label{prob1a} Is it true that for any finite partition $G=A_1\cup\dots\cup A_n$ of a group $G$ there exist a cell $A_i$ of the partition and a subset $F\subset G\times G$ of cardinality $|F|\le n$ such that $G=\bigcup_{(x,y)\in F}xA_iA_i^{-1}y$?
\end{problem}

Another weaker version of Problem~\ref{prob1} also remains open:

\begin{problem} Let $G=A_1\cup\dots\cup A_n$ be a partition of a group $G$ such that $A_iA_j=A_jA_i$ for all indices $1\le i,j\le n$. Is there a cell $A_i$ of the partition with $\cov(A_iA_i^{-1})\le n$?
\end{problem}

Proposition~\ref{p13.1} contrasts with the following theorem proved in \cite{BPS}.

\begin{theorem}\label{t13.5} For every $k\in\IN$, any countable infinite group $G$ admits a partition $G=A\cup B$ such that for every $k$-element subset $K\subset G$ the sets $KA$ and $KB$ are not thick.
\end{theorem}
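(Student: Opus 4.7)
The plan is to recast the conclusion in the language of syndeticity and then construct $A,B$ by a diagonal argument that plants monochromatic ``witnesses'' for each $k$-element subset of $G$.

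\textbf{Reformulation.} A subset $S\subset G$ is not thick iff its complement is left-syndetic, i.e.\ $F(G\setminus S)=G$ for some finite $F\subset G$. Because $A\cup B=G$ is a partition, $G\setminus KA=\{x\in G:K^{-1}x\subset B\}$ and symmetrically $G\setminus KB=\{x\in G:K^{-1}x\subset A\}$. Writing $S_c(K):=\{x\in G:K^{-1}x\subset c\}$ for $c\in\{A,B\}$, the theorem is equivalent to producing a partition $G=A\cup B$ such that, for every $k$-element $K\subset G$, the sets $S_A(K)$ and $S_B(K)$ are both syndetic in $G$.

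\textbf{Construction.} Enumerate $G=\{g_n:n\in\w\}$ and the $k$-element subsets as $\{K_n:n\in\w\}$. To each pair $(i,c)\in\w\times\{A,B\}$ assign in advance a finite ``translator'' $F_{i,c}\subset G$ of suitably calibrated size. Build $A,B$ in stages as increasing unions $A=\bigcup_n A_n$, $B=\bigcup_n B_n$ of finite disjoint commitments exhausting $G$. At stage $n$ handle two tasks: (a)~commit $g_n$ to $A$ or $B$ if it has not yet been placed; (b)~for each $i\le n$ and each $c\in\{A,B\}$, find some $f\in F_{i,c}$ such that all $k$ elements of $K_i^{-1}f^{-1}g_n$ are uncommitted, and commit them to colour $c$. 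If task~(b) can be performed at every stage, then in the final partition each $g_n$ satisfies $f^{-1}g_n\in S_c(K_i)$ for some $f\in F_{i,c}$ and every $i\le n$, giving $F_{i,c}\cdot S_c(K_i)=G$ and hence syndeticity of $S_c(K_i)$ for every $i$.

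\textbf{Main obstacle.} The crux is guaranteeing that task (b) never gets stuck. Each previously committed element blocks at most $k$ choices of $f\in F_{i,c}$ (namely the $f$'s of the form $g_n x^{-1}k_0$ for a committed $x$ and $k_0\in K_i^{-1}$), so the count of forbidden $f$'s at stage $n$ is bounded by $k$ times the number of prior commitments. Since commitments accumulate as $n$ grows, a constant $|F_{i,c}|$ will eventually be overwhelmed; one must instead allow $|F_{i,c}|$ to grow sufficiently quickly in $i$ and schedule the requirements in a priority-style fashion so that at any stage only finitely many are ``active'' and the combinatorial pigeonhole continues to provide a valid $f$. Calibrating this schedule against the recursive growth of commitments — so that large-$i$ demands are deferred long enough to absorb the activity of small-$i$ demands without starving them of room in $G$ — is the technical heart of the argument. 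Once this is arranged, any elements of $G$ left uncommitted after all witness placements may be assigned arbitrarily to $A$ or $B$ without affecting syndeticity.
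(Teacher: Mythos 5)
Your reformulation is correct: $G\setminus KA=\{x:K^{-1}x\subset B\}$, so the assertion is equivalent to making $S_c(K):=\{x\in G:K^{-1}x\subset c\}$ syndetic for every $k$-element $K$ and each colour $c\in\{A,B\}$. The proposed construction, however, has a fatal resource-exhaustion problem that you yourself flag as ``the technical heart'' but leave unresolved -- and in fact it cannot be resolved within the scheduling framework you set up. Fix $i=0$ and $c=A$. Your scheme requires that at each stage $n\ge 0$ you find $f\in F_{0,A}$ with $K_0^{-1}f^{-1}g_n$ entirely uncommitted. After stage $n$, roughly $kn^2$ elements have been committed (each stage $m\le n$ runs $O(m)$ sub-tasks, each committing $k$ elements), and each committed element blocks at most $k$ values of $f$, so the number of blocked translators is $O(k^2n^2)$ and grows without bound. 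A single \emph{fixed} finite set $F_{0,A}$ is therefore eventually overwhelmed, no matter how it was chosen. Your proposed remedy -- letting $|F_{i,c}|$ grow with $i$ and deferring large-$i$ demands via a priority schedule -- does not touch this obstruction, because the difficulty is already present for the very first requirement and concerns the dependence on the stage index $n$, not on $i$. One would have to abandon the ``one fresh block per $g_n$'' paradigm and instead argue that earlier commitments supply witnesses for later $g_n$ as well; but then syndeticity of $S_c(K_i)$ (i.e.\ a \emph{uniform} bound on the gaps) needs a genuinely new idea to guarantee, and the proposal does not supply one.

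The paper's own route is entirely different. It does not attempt a stage-by-stage commitment argument at all; instead it invokes the existence of a left-invariant \emph{syndetic submeasure} on every countable group (Theorem 5.1 of [BPS], in turn deduced from Weiss's theorem on minimal models for free actions). Roughly, a syndetic submeasure $\mu$ lets one find, inside any set of submeasure $<1$, a syndetic complement; a measure-theoretic pigeonhole then produces the partition $G=A\cup B$ for which each $S_c(K)$ is syndetic simultaneously for all $k$-element $K$. This global tool sidesteps the accumulating-commitments problem that defeats the direct construction. To salvage a combinatorial proof you would need, at minimum, a mechanism by which one placed block serves as a witness for infinitely many $g_n$ in a controlled way -- which is exactly the kind of uniformity the syndetic submeasure packages.
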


This theorem was proved with help of syndetic submeasures. A density $\mu:\PP(G)\to[0,1]$ on a group $G$ is called {\em syndetic} if for each subset $A\subset G$ with $\mu(A)<1$ and each $\e>\frac1{|G|}$ there is a subset $B\subset G\setminus A$ such that $\mu(B)<\e$ and $\cov(B)<\infty$. It can be shown that the density $\is_{12}$ is syndetic. According to Theorem 5.1 of \cite{BPS} (deduced from \cite{Weiss}), each countable group admits a left-invariant syndetic submeasure. This fact was crucial in the proof of Theorem~\ref{t13.5}.

\begin{problem} Does each group $G$ admit a left-invariant syndetic submeasure? Is the submeasure $\sis_{123}$ syndetic on each group $G$? Is the upper Banach density $d^*$ syndetic on each group $G$?
\end{problem}

Also we do not know if amenability of groups can be characterized via extremal densities or packing indices.

\begin{problem} Is a group $G$ amenable if for each partition $G=A_1\cup \dots\cup A_n$ there is a cell $A_i$ of the partition satisfying one of the conditions: \textup{(a)}~$\is_{12}(A_i)\ge\frac1n$, \textup{(b)}~$\pack(A_i)\le n$, \textup{(c)}~$\cov(A_iA_i^{-1})\le n$, \textup{(d)}~$\is_{12}(A_i)>0$, \textup{(e)}~$\pack(A_i)<\w$?
\end{problem}

\end{document}